\numberwithin{equation}{section}
\newtheorem{definition}{Definition}[section]
\newtheorem{theo}[definition]{Theorem}
\newtheorem{lemma}[definition]{Lemma}
\newtheorem{remark}{Remark}
\newcommand{\CX}{\mathcal{X}}
\newcommand{\beq}{\begin{equation}}
\newcommand{\eeq}{\end{equation}}
\newcommand{\fer}[1]{(\ref{#1})}
\newcommand{\p}{\partial}
\newcommand{\f}{\frac}
\newcommand{\al}{\alpha}
\newcommand{\R}{\mathbb{R}}
\author{H\'el\`ene Leman\thanks{CMAP, Ecole Polytechnique, UMR 7641, route de
    Saclay, 91128 Palaiseau Cedex-France; E-mail: \texttt{helene.leman@polytechnique.edu}}, Sylvie M\'el\'eard\thanks{CMAP, Ecole Polytechnique, UMR 7641, route de
    Saclay, 91128 Palaiseau Cedex-France; E-mail: \texttt{sylvie.meleard@polytechnique.edu}}, Sepideh Mirrahimi\thanks{Institut de Math\'ematiques de Toulouse; UMR 5219, Universit\'e de Toulouse; CNRS, UPS IMT, F-31062 Toulouse Cedex 9, France; E-mail: \texttt{Sepideh.Mirrahimi@math.univ-toulouse.fr}}}
\title{Influence of a spatial structure on the long time behavior of a competitive Lotka-Volterra type system}
\begin{document}
\maketitle

\begin{abstract}
To describe population dynamics, it is crucial to take into account jointly evolution mechanisms and spatial motion. However, the models which include these both aspects, are not still well-understood. Can we extend the existing results on type structured populations, to  models of populations structured by type and space, considering diffusion and nonlocal competition between individuals?\\
We study a nonlocal competitive Lotka-Volterra type system, describing a spatially structured population which can be either monomorphic or dimorphic. Considering spatial diffusion, intrinsic death and birth rates, together with death rates due to intraspecific and  interspecific competition between the individuals, leading to some integral terms, we analyze the long time behavior of the solutions.  We first prove existence of steady states and next  determine the long time limits, depending on the competition rates and the principal eigenvalues of some operators, corresponding somehow to the strength of  traits. Numerical computations illustrate that the introduction of a new mutant population can lead to the long time evolution of the spatial niche. 

\end{abstract}

\let\thefootnote\relax\footnotetext{{\bf Key-Words:} {Structured populations, phenotypical and spatial structure, parabolic Lotka-Volterra type systems, stationary solutions, long time behavior}}

\footnotetext{{\bf AMS Class. No:} {35Q92, 45K05, 35K57, 35B40}}

\section*{Introduction}

The spatial aspect of populations is an important ecological issue  which has been extensively studied (see \cite{DLM00}, \cite{DL94b},  \cite{McG99}, \cite{Murray89}, \cite{TK96}). The interplay between space and evolution is particularly crucial in the emergence of polymorphism and spatial patterns  and  the heterogeneity of the environment is considered as essential  (\cite{FM88}, \cite{K02}). The combination of spatial motion and mutation-selection processes is also known for a long time to have important effects on population dynamics (\cite{Endler77}, \cite{Mayr63}).
Recently biological studies observed that classical models could underestimate the invasion speed  and suggested that invasion and evolution are closely related. The ecological parameters can have a strong effect on the expansion of invading species and conversely, the evolution can be conditioned by the spatial behavior of individuals related to the resources available. 
The paper by Philipps and Co \cite{Pal06} shows the strong impact of the morphologic parameters of the cane toads on the expansion of their invasion.\\ 

\noindent In this context, the study of space-related traits, such as dispersal speed or sensibility to heterogeneously distributed resources, is fundamental and has been object of mathematical developments. In Champagnat-M\'el\'eard \cite{CM07}, a stochastic individual-based model is  introduced where individuals are characterized both by their location and one or several phenotypic and heritable traits. The individuals move, reproduce with possible mutation and die of natural death or because of competition for resources. The spatial motion is modeled as a diffusion and the  spatial interaction between individuals is modeled by a convolution kernel in some spatial range. In a large population scale, it is shown that this microscopic stochastic model can be approximated by a nonlinear nonlocal reaction-diffusion equation defined on the space of traits and space. The latter has been studied in Ferri\`ere-Desvillettes-Pr\'evost \cite{DFP04} and Arnold-Desvillettes-Pr\'evost \cite{ADP12} and existence and uniqueness of the solution, numerical simulations and steady states are studied. Propagation phenomena and existence of traveling waves are explored numerically and theoretically for different variants of such models in \cite{MA.JC.GR:13}, \cite{OB.VC.NM.RV:12}, \cite{HB.GC:12}, \cite{BC13}. This problem has also been studied from an asymptotic point of view using Hamilton-Jacobi equations   \cite{EB.VC.NM.SM:12}, \cite{EB.SM:13}. \\

\noindent Despite several recent attempts to study such models, dynamics of populations structured by trait and space are not completely understood and several interesting and challenging questions remain to be resolved in this field (see for instance \cite{EB.VC.NM.SM:12}). In particular, the works quoted above concentrate on the case where the mutations are frequent such that the diffusion in space and the mutations are modeled in the same time scale. Our objective is to understand the framework of adaptive dynamics where the mutations are rare enough such that between two mutations the dynamics is driven by a system of nonlocal reaction-diffusion equations, each of them describing the dynamics and the spatial distribution of one trait. We study the steady states and the long time behavior of such systems.  Note that although the existence of steady states for a model with continuous trait and space is provided in \cite{ADP12}, the long time behavior of solutions is not known, to our knowledge, for discrete or continuous traits. However, in the case of a single trait and considering only homogeneous environments, \cite{BNPR09} provides a study  of steady states and traveling waves.\\

\medskip \noindent In this paper we focus on this problem, for the simplest cases where the population is either monomorphic (a single type is involved) or dimorphic (the population is  composed of two-type subpopulations). The space state is  an open bounded subset $\mathcal{X}$ of $\mathbb{R}^d$ with a boundary of class $C^3$. In the dimorphic case, the spatial density of the population is modeled by the system of nonlinear partial differential equations of parabolic type
\begin{equation}
\label{eq:intro}
\left\{
\begin{aligned}
     \left\{
	\begin{aligned}
	&\partial_t g_1(t,x)= m_1 \Delta_x g_1(t,x)+ \bigg(a_1(x)-\int_{\mathcal{X}}I_{11}(y)g_{1}(t,y)dy  -\int_{\mathcal{X}}I_{12}(y)g_{2}(t,y)dy \bigg) g_1(t,x),\\
	&\partial_n g_1(t,x)=0, \text{ on } \partial \mathcal{X}, 
	\end{aligned}
     \right.\\
     \left\{
	\begin{aligned}
	&\partial_t g_2(t,x)=m_2 \Delta_x g_2(t,x)+ \bigg(a_2(x)-\int_{\mathcal{X}}I_{21}(y)g_{1}(t,y)dy)-\int_{\mathcal{X}}I_{22}(y)g_{2}(t,y)dy \bigg) g_2(t,x),\\
	&\partial_n g_2(t,x)=0, \text{ on } \partial \mathcal{X}, 
	\end{aligned}
     \right.
\end{aligned}
\right.
\end{equation}
where $g_{1}(t,x)$ (respectively $g_{2}(t,x)$) denotes the density of individuals of type $1$ (resp. of type $2$), in position $x$ at time $t$. 
 The  density dynamics is driven by  growth rates $a_{1}$ and $a_{2}$ which depend on the spatial position of individuals and on their type. Further the competition is modeled by  nonlocal death rates depending on the environment heterogeneity through  kernels $I_{ij}, i,j = 1,2$. We will show that this system admits $4$ non-negative steady states  depending on the ecological parameters. The stability of these  states is based on the signs of the principal eigenvalues 
 \begin{equation*}
\begin{aligned}
&H_1=-\min_{{\underset{u\not\equiv 0}{u \in H^1}}}  \dfrac{1}{\|u\|^2_{L^2}} \left[ \int_{\mathcal{X}} m_1 |\nabla u|^2dx-\int_{\mathcal{X}}a_1(x)u(x)^2dx\right],\\
&H_2=-\min_{{\underset{u\not\equiv 0}{u \in H^1}}}  \dfrac{1}{\|u\|^2_{L^2}} \left[ \int_{\mathcal{X}} m_2 |\nabla u|^2dx-\int_{\mathcal{X}}a_2(x)u(x)^2dx\right].\\
\end{aligned}
\end{equation*}
Here $H^1$ is the Sobolev space of order $1$ on $\mathcal{X} $.
The first steady state is the trivial one $(0,0)$ describing an extinct population, two of them describe  long term specialization on a single type and only one subpopulation has a non trivial long time behavior. The last one describes a co-existence case where individuals with two types exist in a long time scale.  \\
 
 \noindent The main results of the paper (Theorems \ref {theo:main} and \ref{theo:special})
give assumptions based on spectral parameters and competitive kernels under which the solution of the equation converges, as time goes to infinity, to one of these steady states.
 This result is new and interesting by itself but it will also be the first step in an adaptive dynamics framework, if we want to understand how mutant individuals invade the population at the evolutive scale (see \cite{C06}, \cite{CFM08}).

\medskip
\noindent 
Section \ref{sec:mon} deals with the case of a monomorphic population where all individuals have the same type. The density dynamics is led by a nonlinear partial differential equation of parabolic type with a non-local competition term. In this case, we show existence of steady states for a more general competition term:
\begin{equation}
\label{eq:intro2}
 \left\{ \begin{aligned}
         &\partial_t g(t,x)= m\Delta g(t,x)+ a(x)g(t,x)- \left(\int_{\mathcal{X}}I(x,y)g(t,y)dy\right) g(t,x),  \;  \forall (t,x) \in \mathbb{R}^+ \times \mathcal{X}\\
	 &\partial_n g(t,x)=0, \; \forall (t,x) \in \mathbb{R}^+ \times \partial \mathcal{X},\\
	 &g(0,x)=g^0(x).
        \end{aligned}
\right.
\end{equation}
 We then explore the long-time behavior of the solution, for the particular case $I(x,y)=I(y)$. Note that, the long-time behavior of the solution of \fer{eq:intro2} with general competition kernel $I(x,y)$ is not yet understood to our knowledge. In  \cite{Co13} the steady states and the long time behavior of the solution of a similar model are studied using different techniques,  for the monomorphic case. However, we provide a shorter result for the convergence in long time of solutions which is easily generalizable to dimension 2.\\
 
 \noindent Section \ref{sec:dim} is devoted to the two-type case of  dimorphic population. In this section, we present our main results on the steady states and the long-time behavior of the  solution for  the system of partial differential equations \eqref{eq:intro}. We also present some numerical results which confirm that the introduction of a new mutant population can lead to a coexistence or an invasion of the previous population, that is linked with an evolution of spatial niches.\\

 \noindent Finally in Section \ref{sec:proof} we provide the proofs of our main Theorems \ref{theo:main} and \ref{theo:special} on the long time behavior and the stability of steady states for a dimorphic population.\\

\noindent The mathematical analysis rely on the spectral decomposition of compact operators, fixed point arguments and the study of perturbed Lotka-Volterra type systems.\\

\noindent {Notation:}

\noindent The space set $\mathcal{X}$ is an open bounded subset of $\mathbb{R}^d$ with a boundary of class $C^3$. We will denote by ${L}^k$ the Lebesgue space on $\mathcal{X}$  of order $k\in \mathbb{N}^*$ and by ${H}^k$ the Sobolev space on $\mathcal{X}$  of order $k \in \mathbb{N}^*$. We denote by $C^{0,1}$ the space of Lipschitz continuous functions on $\mathcal{X}$.\\
For all $x \in \partial \mathcal{X}$, we denote by $n(x)$ the outward normal to the boundary $\partial \mathcal{X}$ at point $x$. For a sufficiently smooth function $u$ and $x \in \partial\mathcal{X}$, we denote by $\partial_n u(x)$ the scalar product $\nabla u(x) . n(x)$.\\

\section{Monomorphic population}
\label{sec:mon}

In this section, we study the case of a monomorphic population. 
The model is written as below
\begin{equation}
\label{eq:parabolique}
 \left\{ \begin{aligned}
         &\partial_t g(t,x)= m\Delta g(t,x)+ a(x)g(t,x)- \left(\int_{\mathcal{X}}I(x,y)g(t,y)dy\right) g(t,x),  \;  \forall (t,x) \in \mathbb{R}^+ \times \mathcal{X}\\
	 &\partial_n g(t,x)=0, \; \forall (t,x) \in \mathbb{R}^+ \times \partial \mathcal{X},\\
	 &g(0,x)=g^0(x), \; \forall x \in \mathcal{X}.
        \end{aligned}
\right.
\end{equation}
Here $g(t,x)$ denotes the density of individuals in position $x$ and at time $t$. The Laplace term corresponds to the diffusion of individuals in space and the positive constant $m$ is the rate of this diffusion. We represent the intrinsic growth rate by $a$, which depends on the position of individuals. Finally, the last term corresponds to the mortality due to competition. We denote indeed by $I(x,y)$ the competition rate between individuals at position $x$ and individuals at position $y$.\\

We first give a necessary and sufficient condition for \fer{eq:parabolique} to have a steady solution. We then prove, in a particular case where $I(x,y)=I(y)$, that the solution converges in long time to the unique steady solution of \fer{eq:parabolique}.

\subsection{Existence of steady state}

In this section, we give a necessary and sufficient condition for \fer{eq:parabolique} to have a steady solution. To this end, we make the following assumptions on the coefficients:
\beq
\label{as:a}
a\in C^{0,1}(\CX),\quad \text{and} \quad |a(x)|\leq a_\infty, \quad \text{for all $x\in \mathcal{X}$,}
\eeq
\begin{equation}
\label{hyp:c}
\left\{
\begin{aligned}
&I(\cdot,\cdot)\in C(\bar{\CX}\times \bar{\CX}) \text{ is nonnegative and Lipschitz continuous with respect to the first variable,}\\
&\text{if $d=1$ : } \exists I_- >0 / \; \forall x\in \mathcal{X}, \; I(x,x)\geq I_-,  \\
&\text{if $d>1$ : } \exists I_->0 / \; \forall (x,y) \in \mathcal{X}\times \mathcal{X}, \;  I(x,y) \geq I_- ,
\end{aligned}
\right.
\end{equation}
Consequently, there exists a positive constant $I^+$ such that
\beq
\label{as:K2}
I(x,y)\leq I^{+},\quad \text{for all $x,y\in\mathcal{X}$. }
\eeq
Here, we note that these assumptions are not necessarily optimal. In particular, the regularity of the coefficients may be relaxed but this is sufficient for our purpose and allows us to avoid some technical details. \\

\noindent To state our result we also need the following lemma which can be derived easily from Krein-Rutman's Theorem (see for instance the chapter 6 of \cite{Evans}) and its proof is left to the reader. 

\begin{lemma}[{\bf Eigenvalue problem}]\label{lem:ev}
There exists a principal eigenvalue $H$ to the following eigenvalue problem:
\begin{equation}
 \label{eq:eigenfunction}
\begin{cases}
m\Delta \overline u(x)+a(x)\overline u= H \overline{u},   &   \forall x\in \mathcal{X},\\
\partial_n \overline u(x)=0,   &   \forall  x\in \p\mathcal{X}.
\end{cases}
\end{equation}
This eigenvalue is simple and the corresponding eigenfunction $\overline u$ is the only eigenfunction which is strictly positive in  $\mathcal{X}$. Moreover, $H$ can be computed from the following variational problem
\begin{equation*}
 H=-\min_{{\underset{u\not\equiv 0}{u \in H^1}}} \dfrac{1}{\|u\|^2_{L^2}} \left[ \int_{\mathcal{X}} m|\nabla u|^2dx-\int_{\mathcal{X}}a(x)u^2(x)dx\right].
\end{equation*}
\end{lemma}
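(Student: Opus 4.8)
The plan is to realize $H$ as a spectral shift of the principal eigenvalue of a positive compact operator and then invoke the Krein--Rutman theorem. First I would remove the indefiniteness of $a$ by a shift: since $|a|\le a_\infty$, I fix a constant $C>a_\infty$ and consider the operator $A=-m\Delta-a+C$ on $\CX$ with the Neumann condition $\partial_n u=0$. Its associated symmetric bilinear form $B(u,v)=\int_{\CX} m\nabla u\cdot\nabla v+\int_{\CX}(C-a)uv$ satisfies $C-a\ge C-a_\infty>0$, hence is coercive on $H^1$; by Lax--Milgram $A$ is boundedly invertible. Composing with the compact embedding $H^1\hookrightarrow L^2$ (Rellich--Kondrachov, available since $\partial\CX$ is smooth), the solution operator $T:=A^{-1}:L^2\to L^2$ is self-adjoint and compact.

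Second, I would establish the strong positivity of $T$: if $f\ge 0$ with $f\not\equiv 0$, then $u=Tf$ solves $-m\Delta u+(C-a)u=f$ with homogeneous Neumann data, and the strong maximum principle together with Hopf's lemma at the boundary force $u>0$ throughout $\overline{\CX}$. This is the key structural input and, in my view, the main point to get right, since simplicity, the sign of the eigenfunction, and uniqueness among positive eigenfunctions all rest on it.

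Third, I would apply the Krein--Rutman theorem to the compact, strongly positive operator $T$: its spectral radius $\mu=r(T)>0$ is an eigenvalue, it is simple, the associated eigenfunction $\overline u$ may be taken strictly positive in $\CX$, and no other eigenvalue admits a nonnegative eigenfunction. Translating back, $T\overline u=\mu\overline u$ is equivalent to $m\Delta\overline u+a\overline u=(C-1/\mu)\overline u$, so $H:=C-1/\mu$ is the sought principal eigenvalue, simple, with $\overline u$ the unique (up to scaling) positive eigenfunction; and because $\mu$ is the largest eigenvalue of $T$, $H$ is the largest eigenvalue of $m\Delta+a$.

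Finally, the variational formula follows from self-adjointness. Integrating by parts and using $\partial_n u=0$ to kill the boundary term gives $\langle (m\Delta+a)u,u\rangle=-\int_{\CX}m|\nabla u|^2+\int_{\CX}a u^2$ for $u\in H^1$. Since $m\Delta+a$ is self-adjoint, its largest eigenvalue equals the supremum of this Rayleigh quotient over $u\not\equiv 0$, which is exactly $H=-\min_{u\not\equiv 0}\frac{1}{\|u\|_{L^2}^2}\left[\int_{\CX}m|\nabla u|^2-\int_{\CX}a u^2\right]$, the minimizer being the principal eigenfunction $\overline u$. The only delicate points beyond the maximum-principle step are routine verifications (coercivity, compactness, self-adjointness of $T$), so the argument is indeed a short application of Krein--Rutman.
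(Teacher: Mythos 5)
Your proposal is correct and takes essentially the same route as the paper: the paper gives no proof at all, stating only that the lemma "can be derived easily from Krein--Rutman's Theorem" (citing Chapter 6 of Evans), and your argument is precisely that derivation, with the shift $C>a_\infty$, the compact self-adjoint inverse, the maximum-principle/Hopf positivity step, and the Rayleigh-quotient identification of $H$. The one technical caveat is that the strong form of Krein--Rutman requires a cone with nonempty interior, which the nonnegative cone in $L^2$ does not have, so you should apply it to the restriction of $T$ to $C(\overline{\mathcal{X}})$ (available after an elliptic-regularity bootstrap, and with the same eigenfunctions); alternatively, since $T$ is self-adjoint, simplicity and positivity of the principal eigenfunction follow directly from the variational characterization by replacing a minimizer $u$ with $|u|$ and invoking the strong maximum principle.
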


\noindent We are now ready to state our first result:
 
\begin{theo}[{\bf Existence of steady state}]
\label{theo:stationnaire}
Assume \fer{as:a}, \fer{hyp:c}. (i) If $H\leq 0$, then there is no non-trivial nonnegative steady solution for  \fer{eq:parabolique}. (ii) If $H>0$, then \fer{eq:parabolique} has a strictly positive steady solution  $\bar{g}\in C^{2}(\CX)$, i.e. $\bar g$ solves
\begin{equation}
\label{eq:stationnaire}
\left\{
\begin{aligned}
&-m\Delta \bar{g}(x)=\left(a(x)-\int_{\mathcal{X}} I(x,y)\bar{g}(y)dy \right)\bar{g}(x), \; \forall x \in \mathcal{X}\\
&\partial_n \bar{g}(x)=0, \; \forall x \in \partial \mathcal{X}.
\end{aligned}
\right.
\end{equation}
\end{theo}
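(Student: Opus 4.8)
The plan is to treat the two parts separately: the non-existence statement (i) is obtained by pairing any putative steady state against the principal eigenfunction $\overline u$ of Lemma~\ref{lem:ev}, while the existence statement (ii) is obtained by constructing ordered sub- and super-solutions and then running a fixed-point argument, the nonlocal competition term being the one feature that forces care.

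For part (i), suppose $\bar g\ge 0$, $\bar g\not\equiv 0$ solves \fer{eq:stationnaire}. First I would read \fer{eq:stationnaire} as the linear elliptic identity $m\Delta\bar g+b(x)\bar g=0$ with bounded coefficient $b(x)=a(x)-\int_\CX I(x,y)\bar g(y)\,dy\in L^\infty(\CX)$, so that the strong maximum principle forces $\bar g>0$ in $\CX$. Then, multiplying \fer{eq:stationnaire} by $\overline u$ and the eigenvalue equation \fer{eq:eigenfunction} by $\bar g$, integrating and using Green's formula together with the two Neumann conditions to cancel the Laplacian and boundary contributions, I obtain
\beq
\int_\CX\Big(\int_\CX I(x,y)\bar g(y)\,dy\Big)\,\bar g(x)\,\overline u(x)\,dx=H\int_\CX \overline u(x)\,\bar g(x)\,dx .
\eeq
Here the right-hand side carries the sign of $H$, since $\overline u>0$ and $\bar g\ge 0$ give $\int_\CX\overline u\,\bar g>0$, whereas the left-hand side is strictly positive: for $d>1$ one has $\int_\CX I(x,y)\bar g(y)\,dy\ge I_-\|\bar g\|_{L^1}>0$, and for $d=1$ the bound $I(x,x)\ge I_-$, the continuity of $I$ and $\bar g>0$ still make this inner integral positive at every $x$. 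Thus $H\le 0$ is incompatible with such a $\bar g$, which proves (i).

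For part (ii), assume $H>0$ and produce a solution by the sub- and super-solution method. A constant $\bar g^+\equiv M$ is a super-solution as soon as $M$ is large: since $c_0:=\inf_x\int_\CX I(x,y)\,dy>0$ by \fer{hyp:c} and compactness, one has $0\ge (a(x)-M\int_\CX I(x,y)\,dy)M$ for $M\ge a_\infty/c_0$. A sub-solution is $\bar g^-=\varepsilon\,\overline u$: using $-m\Delta\overline u=(a-H)\overline u$ from \fer{eq:eigenfunction}, the sub-solution inequality reduces, after dividing by $\varepsilon\overline u>0$, to $\varepsilon\int_\CX I(x,y)\overline u(y)\,dy\le H$, which holds for $\varepsilon$ small precisely because $H>0$; shrinking $\varepsilon$ further gives $\varepsilon\overline u\le M$, and both barriers satisfy the homogeneous Neumann condition. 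The main obstacle is now that the reaction $f(x,\bar g)=(a(x)-\int_\CX I(x,y)\bar g(y)\,dy)\bar g(x)$ is nonlocal, hence not a pointwise function of $\bar g(x)$, so the classical monotone iteration between $\bar g^-$ and $\bar g^+$ loses its order-preserving structure. To bypass this I would recast the problem as a fixed point: for $\kappa$ large, the nonnegative fixed points of the compact, positivity-preserving map $\Phi(g)=(-m\Delta+\kappa)^{-1}[(a(x)-\int_\CX I(x,y)g(y)\,dy+\kappa)\,g]$ (with Neumann conditions, acting on $C(\bar\CX)$) are exactly the nonnegative steady states. An a priori $L^\infty$ bound — obtained by evaluating \fer{eq:stationnaire} at an interior maximum $x_0$ to get $\int_\CX I(x_0,y)\bar g(y)\,dy\le a_\infty$, hence an $L^1$ bound, and then bootstrapping through elliptic $L^p$ estimates — confines all solutions to a fixed ball, while $H>0$ makes $0$ linearly unstable, i.e. the linearization of $\Phi$ at $0$ has spectral radius greater than one. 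A Leray--Schauder degree argument (equivalently Krasnoselskii cone compression/expansion) then yields a nontrivial nonnegative fixed point; the sub-solution $\varepsilon\overline u$ together with the strong maximum principle upgrades it to $\bar g>0$, and elliptic regularity bootstrapping (the right-hand side of \fer{eq:stationnaire} being continuous, then H\"older, by the Lipschitz dependence of $I$ on its first variable) gives $\bar g\in C^2(\CX)$, completing (ii). The delicate point throughout is the nonlocal term: harmless for the eigenfunction identity of part (i), it forces the replacement of the monotone scheme by a degree/fixed-point argument in part (ii), with the a priori $L^\infty$ estimate and the strict positivity of $\overline u$ as the two inputs making $H>0$ the sharp threshold for existence.
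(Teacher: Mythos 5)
Your part (i) is exactly the paper's argument: the strong maximum principle gives $\bar g>0$, and pairing \fer{eq:stationnaire} against the principal eigenfunction $\overline u$ of Lemma \ref{lem:ev} yields $H\int_\CX \bar g\,\overline u\,dx=\int_\CX\int_\CX I(x,y)\bar g(y)\bar g(x)\overline u(x)\,dy\,dx>0$, contradicting $H\le 0$. Nothing to add there.

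For part (ii) your route genuinely differs from the paper's. The paper also reformulates the problem as a fixed point of a compact resolvent-type map (its $\Upsilon$ plays the role of your $\Phi$), but it never invokes degree theory or the linearization at $0$: for $d>1$ it exhibits a convex set $\{g\ge 0,\ \lambda_-\le\int_\CX g\overline u\,dx\le\lambda^+\}$ which is invariant under the map and bounded away from zero in the weighted mass $\int_\CX g\overline u\,dx$, so Schauder alone produces a nontrivial fixed point; for $d=1$ it combines an a priori bound (Lemma \ref{lemme:upsilon}) with a Schaefer-type truncation. Your cone compression/expansion argument (index of $0$ equal to $0$ because $D\Phi(0)$ has spectral radius $>1$ when $H>0$; index of a large ball equal to $1$ by a priori bounds) is a standard and legitimate alternative: it trades the paper's hands-on invariant-set construction for heavier but well-known machinery, and the instability-of-zero computation is correct. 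Two routine caveats: the a priori bound must be established for the whole homotopy family $g=t\Phi(g)$, $t\in(0,1]$, not only for actual solutions (your maximum-point computation does extend to it, as the paper makes explicit in Lemma \ref{lemme:upsilon}, but you should say so); and $\Phi$ preserves the cone only on bounded sets, so $\kappa$ must be chosen after the a priori radius is fixed. Also, the sub/super-solutions you build end up as unused scaffolding: strict positivity of the fixed point follows from the strong maximum principle alone.

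The genuine gap is the case $d=1$. Your key step reads: at a maximum point $x_0$, $\int_\CX I(x_0,y)\bar g(y)\,dy\le a_\infty$, \emph{hence} an $L^1$ bound. That implication uses $I(x_0,y)\ge I_-$ for all $y$, which assumption \fer{hyp:c} grants only when $d>1$; for $d=1$ it assumes only $I(x,x)\ge I_-$ on the diagonal. In that case the inequality at $x_0$ controls merely $\int_{B_r(x_0)}\bar g$ for some $r>0$ where $I(x_0,\cdot)\ge I_-/2$ by continuity, and a local mass bound near the maximum does not by itself bound $\|\bar g\|_{L^1}$ or $\|\bar g\|_{\infty}$: a priori the solution could spike at $x_0$. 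The paper closes exactly this hole in Lemma \ref{lemme:upsilon} by the one-sided curvature estimate $\bar g''\ge -a_\infty\|\bar g\|_\infty/m$, which gives $\bar g(y)\ge\|\bar g\|_\infty\bigl(1-a_\infty(y-x_0)^2/(2m)\bigr)_+$, so the mass near $x_0$ is comparable to $\|\bar g\|_\infty$ and the local positivity of $I$ near the diagonal suffices. Without this (or an equivalent) step, your argument proves the theorem only under the stronger hypothesis that $I$ is bounded below on all of $\CX\times\CX$, i.e.\ it misses the $d=1$ case as stated.
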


\bigskip

\begin{proof}

(i) Let $H\leq 0$. We prove by contradiction that there is no nonnegative solution to \fer{eq:stationnaire}. To this end, we suppose that $0\leq \overline g\in C^{2}(\CX)$ solves \fer{eq:stationnaire}. Supposing that $\overline g$ is non-trivial, from the maximum principle we obtain that $\overline g$ is strictly positive and in particular
$$
\int I(x,y)\overline g(y)\overline g(x)\overline u(x) dxdy>0.
$$
We now multiply \fer{eq:stationnaire} by $\overline u$ and integrate with respect to $x$ to obtain from \fer{eq:eigenfunction},
$$
H\int \overline u \overline g dx = \int I(x,y)\overline g(y)\overline g(x)\overline u(x) dxdy>0.
$$
This is in contradiction with the assumption $H\leq 0$.\\

(ii) We now suppose that $H>0$. To prove that \fer{theo:stationnaire} has a steady solution, we construct a mapping 
\begin{equation*}
   \Upsilon : \left( \begin{aligned}
                      L^2 &&& \rightarrow && L^2\\
                      h &&& \mapsto && g
                     \end{aligned}
	      \right), 
\end{equation*}
 such that any fixed point of this mapping will be a steady state of our problem, as follows.\\
\noindent Thanks to  \fer{as:a}, we can choose $\delta >0$  small enough such that $1-\delta a(x) >0$ for all $x \in \CX$.  Let 
$h\in L^2$. We define $\psi(h)=h\left(1- \delta \int I(\cdot,y)h(y)dy\right)$, and   $\Upsilon(h)=g$, where $g\in H^1(\CX)$ is the unique solution of the following equation  
 \beq
\label{map}
 \begin{cases}
 -m\delta \Delta g(x)-\delta a(x) g(x)+g(x)=\psi(h)(x),&\text{in $\CX$,}\\
 \p_n g(x)=0,& \text{on $\p\CX$.}
\end{cases}
\eeq
Notice that fixed points of the mapping $\Upsilon$ are steady solutions of our problem and conversely. So the last step is to show that $\Upsilon$ has a fixed point. We establish this result thanks to Schauder's fixed point Theorem (see for instance Theorem (4.1) in \cite{cronin}).\\
We first notice from the choice of $\delta$ that  $(-m\delta \Delta + (1-\delta a )Id)^{-1}$ is a continuous and compact mapping. As $\psi:L^2\to L^2$ is a well-defined continuous mapping, we deduce the continuity and compactness of $\Upsilon$.\\

\noindent
We will split the rest of the proof into two cases  depending on the dimension of the domain, denoted by $d$, as in the statement of the theorem.\\

$\bullet$ If $d>1$, using Lemma \ref{lem:ev}, there exists a positive eigenfunction $\overline u$ associated to the positive eigenvalue $H$. We denoted by $\overline{u}^+$ and $\overline{u}_-$ its maximum and minimum values on $\CX$. Then we define
\begin{equation}
\label{def:lambda}
 \lambda^+=\frac{H \overline{u}^+}{I_-} \quad \text{  and } \quad \lambda_-=\frac{H\overline{u}_-}{I^+}
\end{equation}
 and choose $\delta>0$ small enough such that 
\begin{equation}
\label{as:delta}
 \lambda^+\leq \frac{\overline{u}_-}{2 \delta I^+}, \quad \text{ and } \quad 1-\delta H>0.
\end{equation}
\noindent
Let us now introduce the convex closed subset of $L^2$
\begin{equation*}
 \mathcal{Y}=\left\{g\in L^2 | g\geq 0, \lambda_- \leq \int_{\CX} g \overline u \leq \lambda^+   \right\}.
\end{equation*}
\noindent
We now prove that $\Upsilon$ maps $\mathcal{Y}$ into itself.\\
Let $h$ be in $\mathcal{Y}$, and $g=\Upsilon(h)$, they satisfy 
\begin{equation}
\label{eq:gh}
 -m \delta \Delta g(x)-\delta a(x) g(x) +g(x) = h(x) \left( 1- \delta \int_{\CX} I(x,y)h(y)dy \right), \quad \text{in $\CX$}.
\end{equation}
As $h$ is a positive function, and using \eqref{as:delta},
\begin{equation*}
 \psi(h) \geq h\left(1- \delta\int_{\CX} \frac{I(.,y)}{\overline u (y)} h(y) \overline u(y)dy \right) \geq h(1- \delta \frac{I^+}{\overline u_-} \lambda^+) \geq \frac{h}{2} \geq 0.
\end{equation*}
We deduce that $g$ is positive on $\CX$ thanks to the maximum principle.\\
Then we multiply \eqref{eq:gh} by $\overline{u}$ and integrate it over $\CX$,
\begin{equation*}
 \int_{\CX} (-m \delta \Delta g-\delta a g)\overline{u} + \int_{\CX}g \overline u =  \int_{\CX} h\overline{u}- \delta \int_{\CX\times \CX} \frac{I(x,y)}{\overline u(y)}h(y)\overline{u}(y)h(x)\overline{u}(x)dydx
\end{equation*}
From an integration by parts, \eqref{eq:eigenfunction}, \eqref{as:K2} and \eqref{hyp:c}, we find the following inequality:
\begin{equation*}
 \int_{\CX} h\overline{u} \left( 1- \delta \frac{I_-}{\overline u ^+} \int_{\CX} h\overline{u} \right) \geq (1-\delta H) \int_{\CX}g \overline u \geq \int_{\CX} h\overline{u} \left( 1- \delta \frac{I^+}{\overline u_-} \int_{\CX} h\overline{u} \right).
\end{equation*}
Thanks to \eqref{as:delta}, the two polynomial functions $r \mapsto r ( 1- \delta \frac{I_-}{\overline u ^+} r )$ and $r \mapsto r ( 1- \delta \frac{I^+}{\overline u_-} r )$ are increasing on interval $[\lambda_-, \lambda^+]$, that is, as $\int_{\CX} h\overline{u} \in [\lambda_-, \lambda^+]$,
\begin{equation*}
 \lambda^+ \left( 1- \delta \frac{I_-}{\overline u^+} \lambda^+ \right) \geq (1-\delta H) \int_{\CX}g \overline u \geq \lambda_- \left( 1- \delta \frac{I^+}{\overline u_-} \lambda_- \right).
\end{equation*}
Finally we obtain from \eqref{def:lambda} and \eqref{as:delta} that $\lambda^+ \geq  \int_{\CX}g \overline u \geq \lambda_-$, that is $g\in \mathcal{Y}$.\\

\noindent We conclude from the Schauder's fixed point theorem that $\Upsilon$ has a positive fixed point.\\

$\bullet$ For $d=1$, the previous proof is valid if $I$ is strictly positive in $\CX$ but we can relax this assumption to the one in \eqref{hyp:c} thanks to the following method. 
We first prove the following lemma
\begin{lemma}
 \label{lemme:upsilon}
 Assume \fer{as:a} and \fer{hyp:c}. There exists $R>0$ such that for all $g \in L^2$, positive and $t \in [0,1[$ satisfying $g=t \Upsilon(g)$, we have $\|g\|_{L^2} < R$.
 \end{lemma}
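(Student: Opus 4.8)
The plan is to use this $L^2$ a priori bound as the hypothesis of the Leray--Schauder (Schaefer) fixed point theorem applied to $\Upsilon$ in dimension $d=1$, where the ``maps a convex set into itself'' argument used for $d>1$ fails because the kernel need no longer be bounded below away from the diagonal.

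First I would write out the equation satisfied by a nonnegative $g$ with $g=t\Upsilon(g)$, $t\in[0,1[$. Since $\Upsilon(g)=g/t$ solves the elliptic problem defining $\Upsilon$ with right-hand side $\psi(g)$, multiplying by $t$ gives $-m\delta\Delta g-\delta a g+g=t\,g\bigl(1-\delta\int_{\CX}I(\cdot,y)g(y)\,dy\bigr)$. Testing against $g$ and integrating by parts, the boundary terms vanishing by the Neumann condition $\p_n g=0$, yields after division by $\delta$ the energy identity
$$m\int_{\CX}|\nabla g|^2 + t\int_{\CX\times\CX} I(x,y)g(x)^2 g(y)\,dy\,dx + \f{1-t}{\delta}\int_{\CX} g^2 = \int_{\CX} a\,g^2 \le a_\infty\|g\|_{L^2}^2 .$$

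The heart of the matter, and the \emph{main obstacle}, is to bound the cubic competition term from below by $c_0\|g\|_{L^2}^3$ with a constant $c_0>0$ uniform over all admissible $g$; this is delicate precisely because for $d=1$ the kernel is controlled only on the diagonal, $I(x,x)\ge I_-$. To handle it I would normalize, setting $\tilde g=g/\|g\|_{L^2}$ with $M=\|g\|_{L^2}$, so that $\|\tilde g\|_{L^2}=1$ and, by discarding the two nonnegative terms of the identity, $m\|\nabla\tilde g\|_{L^2}^2\le a_\infty$. Thus the normalized functions lie in a fixed bounded subset of $H^1$; here the dimension $d=1$ is used crucially, since $H^1(\CX)$ embeds compactly into $C^0(\bar{\CX})$, making this set precompact and equicontinuous in $C^0$. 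A compactness/contradiction argument then gives $c_0=\inf\int_{\CX\times\CX} I\,\tilde g(x)^2\tilde g(y)>0$: any minimizing sequence converges uniformly to a nonnegative continuous limit $w\not\equiv 0$, and since $w>0$ on some subinterval $J$ on which, by continuity of $I$ and $I(x,x)\ge I_-$, one has $I\ge I_-/2$, the limiting double integral is bounded below by $\tfrac{I_-}{2}(\min_J w)^3|J|^2>0$, contradicting a vanishing infimum.

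Finally I would combine the two ingredients. Inserting $\int_{\CX\times\CX} I\,g(x)^2 g(y)\ge c_0 M^3$ into the energy identity gives $t c_0 M^3+\f{1-t}{\delta}M^2\le a_\infty M^2$, hence $t c_0 M\le a_\infty-\f{1-t}{\delta}$ whenever $M>0$. Recalling that $\delta$ is chosen small enough that $1-\delta a(x)>0$, and shrinking it further if necessary so that $1-\delta a_\infty>0$, set $t_\ast=1-\delta a_\infty\in\,]0,1[$. For $t<t_\ast$ the right-hand side is negative, forcing $M=0$, while for $t\in[t_\ast,1[$ one obtains $M\le a_\infty/(t c_0)\le a_\infty/(t_\ast c_0)$. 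In either case $\|g\|_{L^2}\le a_\infty/(t_\ast c_0)$, so taking any $R$ strictly larger than this bound proves the lemma. Note the case split is genuinely needed: the cubic term alone only controls $t M$, and it is the coefficient $\f{1-t}{\delta}\approx 1/\delta>a_\infty$ for small $t$ that rules out large solutions in that regime.
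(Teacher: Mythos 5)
Your proof is correct, but it follows a genuinely different route from the paper's. The paper argues pointwise, using that $g=t\Upsilon(g)$ is regular enough to be treated classically: it evaluates the equation at a point $x_0$ where $g$ attains its maximum (Neumann conditions give $g'(x_0)=0$, $g''(x_0)\le 0$) to obtain $\int_{\CX}I(x_0,y)g(y)\,dy\le a_\infty$, and then a Taylor--Lagrange expansion combined with the curvature bound $g''\ge -a_\infty\|g\|_\infty/m$ (also read off from the equation) shows that $g$ stays above a fixed fraction of $\|g\|_\infty$ on a fixed neighbourhood of $x_0$; plugging this back yields an explicit bound on $\|g\|_\infty$ (hence on $\|g\|_{L^2}$) that is independent of both $t$ and $\delta$. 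You instead work at the energy level: testing with $g$, normalizing, and using the compact embedding $H^1\hookrightarrow C^0(\bar{\CX})$ (valid only for $d=1$) to produce, by a compactness--contradiction argument, a uniform constant $c_0>0$ below the cubic competition functional. Both arguments exploit the same two one-dimensional ingredients --- positivity of $I$ near the diagonal and a 1D-specific analytic fact (Taylor along the interval for the paper, Morrey/Arzel\`a--Ascoli for you) --- but yours never needs classical regularity of $g$, while the paper's gives an explicit, $\delta$-free constant. This last point is the one caveat you should address: your bound $R=a_\infty/\bigl((1-\delta a_\infty)c_0\bigr)$ depends on $\delta$ (and you even shrink $\delta$ so that $1-\delta a_\infty>0$, which changes $\Upsilon$ itself), whereas in the paper the constant $R$ is found \emph{first} and $\delta$ is then chosen in terms of $R$ through \fer{as:delta2}; with a $\delta$-dependent $R$ this order of quantifiers becomes circular. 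The fix is one line: $c_0$ does not depend on $\delta$, so for every $\delta\le 1/(2a_\infty)$ your argument gives the uniform bound $\|g\|_{L^2}\le 2a_\infty/c_0$, and this single $R$ can then be fed into the choice \fer{as:delta2} of $\delta$. With that remark added, your proof is a complete substitute for the paper's.
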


\begin{proof}
We will use an argument which is similar to the one presented in \cite{BNPR09}. Let $g\in L^2$, positive and $t\in ]0,1[$ such that $g=t\Upsilon(g)$. $g$ attain its maximum value at a point $x_0 \in \overline{\mathcal{X}}$. As $g$ satisfies Neumann boundary conditions, for all $x_0 \in \overline{\mathcal{X}}$, we have $ g'(x_0)=0 \; \text{ and } \; g''(x_0)\leq 0$. Using \fer{map} at the point $x_0$ and since $t<1$, we get
\begin{equation}
\label{eq:maja}
\int_{\mathcal{X}} I(x_0,y)g(y)dy \leq a_{\infty}.
\end{equation}
We then use Taylor-Lagrange's formula for the function $g$ at point $x_0$. For all $y \in \overline{\mathcal{X}}$, there exists $\xi \in ]x_0,y[$ or $]y,x_0[$ such that 
$$
 g(y)=\|g\|_{\infty}+{(y-x_0)^2}g''(\xi)/{2}.
$$ 
Additionally, by using again \fer{map}, we obtain, for all $\xi \in \overline{\mathcal{X}}$, $g''(\xi) \geq  -( \|g\|_{\infty} a_{\infty})/m$. We deduce that $g(y) \geq \|g\|_{\infty} \left( 1-a_{\infty}\dfrac{(y-x_0)^2}{2m} \right)_+$. Therefore \eqref{eq:maja} implies
\begin{equation*}
\|g\|_{\infty} \leq a_{\infty} \left( \int_{\mathcal{X}} I(x_0,y) \left( 1-a_{\infty}\dfrac{(y-x_0)^2}{2m} \right)_+dy \right)^{-1}<+\infty,
\end{equation*}
which is bounded since $I(x_0,.)$ is positive in a neighborhood of $x_0$ from \fer{hyp:c}, we conclude easily.
\end{proof}

\noindent Thanks to this lemma, we choose $\delta$ satisfying
\begin{equation}
 \label{as:delta2}
\delta < \min \left( \frac{\overline{u}_-}{2 R I^+ \|\overline u\|_{L^2}}, \frac{1}{I^+R \sqrt{|\CX|} } \right).
\end{equation}
Then we define the convex closed subset 
$$
\mathcal{Y}=\{h\in L^2 | h\geq 0, \|h\|_{L^2}\leq R, \int_{\CX} h \overline u \geq \lambda_- \},
$$ 
where $\lambda_-$ is defined as before by \eqref{def:lambda}.
For $h\in \mathcal{Y}$, we have $\int_{\CX}I(x,y)h(y)dy \leq I^+  R\sqrt{|\CX|} < \frac{1}{\delta}$ which implies that $\psi(h)$  and $g=\Upsilon(h)$ are positive functions. Moreover, following similar arguments as in the case $d>1$, and noticing that the assumption \eqref{as:delta2} guarantees that $\int_{\CX} h \overline u \,dx \in [\lambda_-, \frac{\overline{u}_-}{2 \delta I^+} ]$, we obtain that 
$\int_{\CX} g \overline u \geq \lambda_-.$\\
As we are not sure that $\|\Upsilon(h)\|_{L^2}\leq R$, we use the following method inspired by the proof of Schaefer's fixed point theorem: we introduce a new function $\tilde{\Upsilon}: L^2 \rightarrow L^2$,
\begin{equation*}
\tilde{\Upsilon}(h) = \begin{cases}
  \Upsilon(h), & \text{ if } \| \Upsilon(h)\|_{L^2} \leq R\\
  \frac{R}{\|\Upsilon(h)\|_{L^2}} \cdot \Upsilon(h), & \text{ if }  \|\Upsilon(h)\|_{L^2} > R.
 \end{cases}
\end{equation*}
Thus $\tilde{\Upsilon}$ is a continuous and compact function that maps $\mathcal{Y}$ into itself. From the Schauder's fixed point theorem, we deduce that it has a fixed point $\bar{g}$. Suppose that $\| \Upsilon(\bar{g}) \|_{L^2}> R$, then $\tilde{\Upsilon}(\bar{g})=\frac{R}{\| \Upsilon(\bar{g})\|_{L^2}} \Upsilon(\bar{g})=\bar{g}$ 
and $\|\tilde \Upsilon(\bar{g})\|_{L^2}=\|\bar{g}\|_{L^2}=R$. However, since $\bar g=t\Upsilon(\bar{g})$ with $t=\f{R}{\|\Upsilon(\bar{g})\|_{L^2}}<1$,  this is in  contradiction with
Lemma \ref{lemme:upsilon}. Hence $\|\Upsilon(\bar{g})\|_{L^2}\leq R$ and $\bar{g}$ is a fixed point of $\Upsilon$.\\

\end{proof}

\subsection{Convergence to steady state solution}
\label{ssec:convmono}
We now come back to our initial problem and we study the long time behavior of the solution to \eqref{eq:parabolique} for a particular case, where $I$ satisfies the following assumptions:
\begin{equation}
\label{as:par}
 I\in L^2(\mathcal{X})\quad \text{and there exists $I_->0$, for all $y \in \mathcal{X}$, $I(y) \geq I_-$.,}
\end{equation}
We also assume the following assumption on the initial condition
\beq
\label{as:ini}
g^0\in L^2(\CX).
\eeq

\noindent We show that the positive solution of the following parabolic equation 
\begin{equation}
\label{eq:champmoyen}
 \left\{ \begin{aligned}
         &\partial_t g(t,x)= m\Delta_x g(t,x)+ a(x)g(t,x)-\left(\int_{\mathcal{X}}I(y)g(t,y) dy\right) g(t,x),  \;  \forall x \in \mathcal{X}\\
	 &\partial_n g(t,x)=0, \; \forall x \in \partial \mathcal{X},  \; \forall t \in \mathbb{R},\\
	 & g(0,x)=g^0(x), \;  \forall x \in \mathcal{X}.
        \end{aligned}
\right.
\end{equation}
tends to the unique steady state of the problem while $t \to +\infty$.
\begin{theo}
\label{theo:convmono}
Assume \fer{as:a}, \fer{as:par} and \fer{as:ini}. If $H>0$, any positive $C^2$-solution to \eqref{eq:champmoyen} tends in $L^{\infty}$ to the unique positive solution to
\begin{equation}
 \label{eq:stationnairecm}
 \left\{ \begin{aligned}
         &- m\Delta \bar{g}(x)= a(x)\bar{g}(x)- \left(\int_{\mathcal{X}}I(y)\bar{g}(y)dy\right) \bar{g}(x),  \;  \forall x \in \mathcal{X}\\
	 &\partial_n \bar{g}(x)=0, \; \forall x \in \partial \mathcal{X},  \; \forall t \in \mathbb{R}. 
        \end{aligned}
\right.
\end{equation}
Moreover, if $H\leq0$, $g(t,\cdot) \overset{L^{\infty}}{\underset{t \rightarrow +\infty}{\longrightarrow}} 0$.\\
\end{theo}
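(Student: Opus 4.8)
The decisive observation is that when $I(x,y)=I(y)$ the competition term $\rho(t):=\int_{\CX} I(y)g(t,y)\,dy$ no longer depends on $x$, so for each realization of $\rho$ the equation \fer{eq:champmoyen} is a \emph{linear} parabolic equation. The plan is to remove the nonlinearity by an exponential change of unknown. Setting
\beq
h(t,x):=\exp\Big(\int_0^t\rho(s)\,ds\Big)\,g(t,x)=:\Lambda(t)\,g(t,x),
\eeq
a direct computation from \fer{eq:champmoyen} shows that the term $\rho(t)g$ cancels and that $h$ solves the \emph{linear} problem $\p_t h=m\Delta h+a(x)h$ with Neumann conditions and $h(0,\cdot)=g^0$. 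Equivalently $h(t,\cdot)=e^{tA}g^0$, where $A=m\Delta+a$ is the self-adjoint operator with Neumann conditions whose principal eigenpair $(H,\overline u)$ is furnished by Lemma \ref{lem:ev}. Note that positivity of $g$ is preserved by the semigroup, so $h>0$ and $\Lambda>0$ throughout.

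Next I would analyse the linear flow spectrally. Since the resolvent of $A$ is compact (as already exploited in the proof of Theorem \ref{theo:stationnaire}), $A$ has a discrete real spectrum $H=H_1>H_2\geq\cdots$ with $H_1$ simple, eigenfunction $\overline u>0$, and spectral gap $H_1-H_2>0$. Expanding $g^0$ on an orthonormal eigenbasis gives
\beq
e^{-Ht}h(t,\cdot)\longrightarrow c_0\,\overline u\quad\text{in }L^2(\CX),\qquad c_0=\frac{\int_{\CX}g^0\,\overline u}{\|\overline u\|_{L^2}^2}>0,
\eeq
the rate being $e^{-(H_1-H_2)t}$; positivity of $g^0$ together with $\overline u>0$ forces $c_0>0$. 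Upgrading this to $L^\infty$ is the first genuinely technical point: I would use the $L^2\to L^\infty$ smoothing of the analytic semigroup $e^{sA}$ at a fixed time lag to convert the exponential $L^2$ decay of the spectral remainder into $L^\infty$ decay, the $C^3$ boundary and the bound \fer{as:a} on $a$ entering here.

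It then remains to control the scalar $\Lambda$. From $\Lambda'(t)=\rho(t)\Lambda(t)=\int_{\CX}I(y)h(t,y)\,dy$ and $\Lambda(0)=1$ one gets $\Lambda(t)=1+\int_0^t\!\int_{\CX}I(y)h(s,y)\,dy\,ds$, which I evaluate from the asymptotics of $h$. Writing $\kappa:=\int_{\CX}I\,\overline u>0$ (positive since $I\geq I_-$) and $f(s):=e^{-Hs}\int_{\CX}I\,h(s,\cdot)\to c_0\kappa$, the elementary estimate $e^{-Ht}\int_0^t e^{Hs}f(s)\,ds\to f_\infty/H$ valid for $H>0$ yields $e^{-Ht}\Lambda(t)\to c_0\kappa/H$. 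Hence
\beq
g(t,\cdot)=\frac{e^{-Ht}h(t,\cdot)}{e^{-Ht}\Lambda(t)}\longrightarrow\frac{c_0\,\overline u}{c_0\kappa/H}=\frac{H}{\int_{\CX}I\,\overline u}\;\overline u=:\bar g\quad\text{in }L^\infty(\CX),
\eeq
and one checks from \fer{eq:eigenfunction} that this $\bar g$ solves \fer{eq:stationnairecm}. Uniqueness follows because any positive solution of \fer{eq:stationnairecm} is a positive eigenfunction of $A$, hence a multiple of $\overline u$ by Lemma \ref{lem:ev}, the constant being pinned down by the relation $\int_{\CX}I\bar g=H$.

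For $H\leq0$ the same representation $g=h/\Lambda$ produces extinction. If $H<0$, then $h(t,\cdot)\to0$ in $L^\infty$ while $\Lambda(t)\to1+\int_0^\infty\!\int_{\CX}Ih<\infty$ (the integrand decays like $e^{Hs}$), so $g\to0$; if $H=0$, then $h(t,\cdot)\to c_0\overline u$ stays bounded while $\Lambda(t)\sim c_0\kappa\,t\to\infty$, so again $g\to0$ in $L^\infty$. Beyond the clean algebra of the substitution, I expect the main obstacle to be precisely the passage from $L^2$ to $L^\infty$ in the spectral asymptotics together with the uniform-in-$x$ control needed to pass to the limit in the ratio $h/\Lambda$; this is the step that relies on parabolic smoothing rather than on the soft structure of the problem.
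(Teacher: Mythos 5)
Your proposal is correct, and its first half coincides with the paper's own proof: the same exponential change of unknown (the paper's $v$ is your $h$) linearizes the equation, and the same combination of spectral decomposition and parabolic regularization yields $e^{-Ht}h(t,\cdot)\to c_0\overline u$ in $L^\infty$; your uniqueness argument is also the paper's. Where you genuinely diverge is in the second half. The paper never analyzes the integrating factor $\Lambda$ directly: it normalizes $g$ by its total mass $\rho(t)=\int_{\CX}g(t,y)\,dy$, deduces $g(t,\cdot)/\rho(t)\to\bar g/\int_{\CX}\bar g$ in $L^\infty$, shows that $\rho$ solves the perturbed logistic ODE $\rho'=\rho\,(H+\mathcal{D}(t)-\mu\rho)$ with $\mathcal{D}(t)\to0$, and concludes via a separate lemma on such perturbed equations (Lemma \ref{lemme:rho}). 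You instead exploit the explicit representation $\Lambda(t)=1+\int_0^t\int_{\CX}I\,h(s,\cdot)\,ds$ together with the elementary asymptotics $e^{-Ht}\int_0^te^{Hs}f(s)\,ds\to f_\infty/H$, which eliminates any need for an ODE perturbation lemma and treats $H>0$, $H=0$, $H<0$ by direct inspection. Your route is shorter and more self-contained for this scalar problem, and it produces the limit with explicit constants. What the paper's detour buys is reusability: the mass-normalization-plus-perturbed-ODE scheme is exactly what is iterated in Section \ref{sec:proof} for the dimorphic system, where Lemma \ref{lemme:rho1rho2} (a perturbed Lotka--Volterra system) replaces Lemma \ref{lemme:rho}. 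There your trick breaks down, because the two integrating factors satisfy a coupled system, $\Lambda_i'=\Lambda_i\left[\int_{\CX}I_{i1}v_1/\Lambda_1+\int_{\CX}I_{i2}v_2/\Lambda_2\right]$, which admits no closed-form solution. So your argument is a legitimate and arguably cleaner proof of Theorem \ref{theo:convmono} itself, but it does not set up the machinery the paper relies on afterwards.
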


\begin{remark}Notice that $H>0$ is a necessary and sufficient condition to obtain a positive limit as $t\to \infty$, that is, only the diffusion parameter and the growth rate $a$ have an influence on the non-extinction of the population in long time. The competition rate affects only the total size of the population at the limit, but not its existence. \\
We also note that a simple example where the assumption $H>0$ is satisfied is the case where the growth rate $a$ is a positive function, that is, if the birth rate is greater than the death rate everywhere, the population will survive. Moreover, if $a$ is a negative function, it is easy to deduce that $H$ is negative and that the population goes extinct.\\

\end{remark}

\begin{proof}

We first check that there exists only one positive steady state in the case $H>0$. Let $\bar{g},\bar{h} \in H^1$ be two positive solutions to \eqref{eq:stationnairecm}. Hence $\bar{g}, \bar{h}$ are positive eigenvectors of the compact, continuous operator $\mathcal{L}=m\Delta(.)+a.$. As Lemma \ref{lem:ev} implies the uniqueness of a positive eigenvector up to a multiplicative constant, $\bar{g}=s \cdot \bar{h}$ with $s\in \mathbb{R}^+$. Moreover, from \eqref{eq:stationnairecm}, we deduce that the principal eigenvalue $H$ is equal to $ \left(\int_{\mathcal{X}}I(y)\bar{g}(y)dy\right)$ and the same result holds for $\bar{h}$. It follows that $\bar{g}=\bar{h}$. We now denote by $\bar{g}$ the unique solution of \eqref{eq:stationnairecm}.\\

The next step is to show the convergence in $L^{\infty}$ towards the positive steady state if $H>0$ and towards $0$ if $H\leq 0$. Let us make the following changes of variable function
\begin{equation*}
 \forall (t,x) \in \mathbb{R} \times \mathcal{X}, \quad v(t,x)=g(t,x) \exp\left(\int_{0}^t \Big(\int_{\mathcal{X}} I(y)g(s,y)dy \Big) ds \right).
\end{equation*}
Thus $v$ is a solution to the equation
\begin{equation}
\label{eq:v}
\left\{
\begin{aligned}
& \partial_t v(t,x) -  m\Delta v(t,x)=a(x) v(t,x), & \forall (t,x) \in \mathbb{R} \times \mathcal{X},\\
& \partial_n v(t,x)=0, & \forall (t,x) \in \mathbb{R} \times \partial\mathcal{X},\\
& v(0,x)=g^0(x),& \forall x \in  \partial\mathcal{X}.
\end{aligned}
\right.
\end{equation}
It is well-known from the spectral decomposition of the operator $\mathcal{L}$ and the regularizing property of the Laplace operator that $v(t,\cdot) e^{-Ht}$ tends uniformly to $\beta \bar g$, a principal eigenvector of the operator $\mathcal{L}$, that is, for some positive constant $\beta$,
\begin{equation}
\label{limit:KAM}
 g(t,x) \exp\left( \int_0^t \Big(\int_{\mathcal{X}} I(y) g(s,y)dy \Big) ds-Ht\right) \overset{L^{\infty}}{\underset{t \rightarrow +\infty}{\longrightarrow}} \beta\bar{g}.
\end{equation}
 We divide this limit by an integrated version of it to obtain
\begin{equation}
\label{eq:limiteg/ro}
 \dfrac{g(t,\cdot)}{\rho(t) } \overset{L^{\infty}}{\underset{t \rightarrow + \infty}{\longrightarrow}} \dfrac{\bar g(\cdot)}{\int_{\mathcal{X}} \bar g dx}>0,
\end{equation}
where $\rho(t)=\int_{\mathcal{X}} g(t,y) dy$, and the r.h.s. is positive since $\bar g$ is a principal eigenvector of $\mathcal{L}$.\\
It remains to show that $\rho(t)$ has a finite limit when $t$ tends to infinity. Integrating \eqref{eq:champmoyen}, we find that $\rho$ is a solution to
\begin{equation*}
\f{d}{dt} \rho(t)= \left( \int_{\mathcal{X}}a(y)\dfrac{g(t,y) }{\rho(t)}dy - \int_{\mathcal{X}}I(y)\dfrac{g(t,y) }{\rho(t)}dy \cdot \rho(t) \right) \rho(t), \; \forall t\in \mathbb{R}.
\end{equation*}
Moreover $-m\Delta \bar g = a\bar g - H\bar g$ using definitions of $H$ and $\bar g$, which leads to $\int_{\mathcal{X}}a(x)\bar g(x)dx=H\int_{\mathcal{X}}\bar g(x)dx$. Therefore \eqref{eq:limiteg/ro} implies
\begin{equation*}
 \int_{\mathcal{X}}a(y)\dfrac{g(t,y) }{\rho(t)}dy \underset{t \rightarrow +\infty}{\longrightarrow} \int_{\mathcal{X}}a(y)\dfrac{\bar g(y)}{\int_{\mathcal{X}} \bar g}dy=H
 \end{equation*}
 and 
 \begin{equation*}\int_{\mathcal{X}}I(y)\dfrac{g(t,y) }{\rho(t)}dx \underset{t \rightarrow +\infty}{\longrightarrow} \mu:=\int_{\mathcal{X}}\dfrac{I(y)\bar g(y)}{\int_{\mathcal{X}} \bar g} dy.
\end{equation*}
 Thus, $\rho$ is solution to the equation $\f{d}{dt} \rho(t)=\rho(t) (H +\mathcal{D}(t)-\mu \rho(t))$, where
\begin{equation*}
   \mathcal{D}(t)= \int_{\mathcal{X}} a(y)\dfrac{g(t,y) }{\rho(t)}dy-H +\left( \int_{\mathcal{X}} I(y)\dfrac{g(t,y) }{\rho(t)}dy-\mu \right) \rho(t) \underset{t \rightarrow +\infty}{\longrightarrow} 0
\end{equation*}
Indeed, since $I$ is positive, we have $\f{d}{dt} \rho(t) \leq (a_{\infty} -I_- \cdot \rho(t)) \rho(t)$, that is sufficient to conclude that $\sup_{t \in \mathbb{R}^+} \rho(t) <+\infty$, and $\mathcal{D}(t) \longrightarrow 0$, as $t \rightarrow +\infty$.\\
Next, we show that $\rho(t)$ tends to $H/\mu$ if $H>0$ and to $0$ if $H \leq 0$ thanks to the following lemma, proven below.

\begin{lemma}
\label{lemme:rho}
 Let $\mu \in \mathbb{R}^+_*$, and $\rho$ be a positive solution on $\mathbb{R}$ to $\f{d}{dt} \rho(t)=\rho(t) (r +\mathcal{E}(t)-\mu \rho(t))$, where $\mathcal{E}(t) \underset{t \rightarrow +\infty}{\longrightarrow} 0$, then $\rho(t)$ tends to $\dfrac{r}{\mu}$ if $r\geq0$ or to $0$ if $r<0$ as $t$ tends to $+\infty$.
\end{lemma}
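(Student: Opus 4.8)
The equation is a logistic (Bernoulli) equation perturbed by a term that vanishes at infinity, so the plan is to linearise it by the reciprocal substitution and then read off the asymptotics from an explicit variation-of-constants formula. Since $\rho$ is positive, set $w=1/\rho$. A direct computation gives $\f{d}{dt}w=-\rho^{-2}\,\f{d}{dt}\rho=-(r+\mathcal{E}(t))\,w+\mu$, so that $w$ solves the linear equation $\f{d}{dt}w+(r+\mathcal{E}(t))\,w=\mu$. Writing $A(s,t)=\int_s^t(r+\mathcal{E}(\tau))\,d\tau$ and fixing a base time $t_0$, the variation-of-constants formula reads
\[
w(t)=e^{-A(t_0,t)}\,w(t_0)+\mu\int_{t_0}^t e^{-A(s,t)}\,ds,\qquad t\ge t_0 .
\]
Everything then reduces to estimating $A(s,t)=r(t-s)+\int_s^t\mathcal{E}(\tau)\,d\tau$, using only $\mathcal{E}(t)\to0$.

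The decisive remark is that, given $\epsilon>0$, one may choose $t_0$ so large that $|\mathcal{E}(\tau)|\le\epsilon$ for all $\tau\ge t_0$; then $\left|\int_s^t\mathcal{E}\right|\le\epsilon(t-s)$, whence $(r-\epsilon)(t-s)\le A(s,t)\le(r+\epsilon)(t-s)$ for $t_0\le s\le t$. I expect this to be the only genuine obstacle: $\mathcal{E}$ is merely known to vanish at infinity and need not be integrable, so its primitive $\int_s^t\mathcal{E}$ may grow and cannot be treated as a globally small perturbation. The remedy is precisely to start the clock at a late time $t_0$ and send $\epsilon\to0$ only at the very end.

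For $r>0$, fix $\epsilon\in(0,r)$. The homogeneous term satisfies $0\le e^{-A(t_0,t)}w(t_0)\le e^{-(r-\epsilon)(t-t_0)}w(t_0)\to0$, while the integral term is squeezed between $\f{\mu}{r+\epsilon}\bigl(1-e^{-(r+\epsilon)(t-t_0)}\bigr)$ and $\f{\mu}{r-\epsilon}\bigl(1-e^{-(r-\epsilon)(t-t_0)}\bigr)$. Letting $t\to\infty$ gives $\f{\mu}{r+\epsilon}\le\liminf_{t} w\le\limsup_{t} w\le\f{\mu}{r-\epsilon}$, and then $\epsilon\to0$ yields $w(t)\to\mu/r$, i.e. $\rho(t)\to r/\mu$.

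For $r\le0$ the claim is $\rho\to0$, equivalently $w\to+\infty$. If $r<0$, choosing $\epsilon\in(0,-r)$ makes $r+\epsilon<0$, so $A(s,t)\le(r+\epsilon)(t-s)$ and $\mu\int_{t_0}^t e^{-A(s,t)}\,ds\ge\f{\mu}{-(r+\epsilon)}\bigl(e^{-(r+\epsilon)(t-t_0)}-1\bigr)\to+\infty$; since the homogeneous term is nonnegative, $w\to+\infty$. (Alternatively, here one may argue directly that $\f{d}{dt}\rho\le(r+\epsilon)\rho$ eventually, whence $\rho\to0$ by Gronwall.) If $r=0$, then for every $\epsilon>0$ and the corresponding $t_0$ one has $A(s,t)\le\epsilon(t-s)$, so $\mu\int_{t_0}^t e^{-A(s,t)}\,ds\ge\f{\mu}{\epsilon}\bigl(1-e^{-\epsilon(t-t_0)}\bigr)\to\f{\mu}{\epsilon}$, giving $\liminf_{t} w\ge\mu/\epsilon$; since $\epsilon>0$ is arbitrary, $w\to+\infty$ and thus $\rho\to0=r/\mu$. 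This exhausts all cases and proves the lemma.
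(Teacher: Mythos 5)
Your proof is correct, and it takes a genuinely different route from the paper's. The paper stays at the level of differential inequalities: for $r<0$ it bounds $\frac{d}{dt}\rho\le \frac{r}{2}\rho$ eventually and applies Gronwall, while for $r\ge 0$ it squeezes $\rho$ between solutions of the two logistic equations $u'=u(r\mp\varepsilon_0-\mu u)$ and invokes the known convergence of logistic dynamics before letting $\varepsilon_0\to0$. You instead exploit the Bernoulli structure: the substitution $w=1/\rho$ (legitimate by positivity) linearises the equation, and the variation-of-constants formula reduces everything to explicit bounds $(r-\epsilon)(t-s)\le A(s,t)\le(r+\epsilon)(t-s)$ valid after a late starting time $t_0$. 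What your approach buys is self-containedness and uniformity: you never appeal to the asymptotics of the logistic equation or to an ODE comparison principle, the three cases $r>0$, $r=0$, $r<0$ are all read off from the same explicit formula, and the borderline case $r=0$ — which in the paper's squeeze argument is handled somewhat implicitly (the lower bound there degenerates to mere positivity) — gets a clean, quantitative treatment via $\liminf w\ge\mu/\epsilon$ for every $\epsilon$. You also correctly identified the one genuine subtlety shared by both arguments, namely that $\mathcal{E}$ need not be integrable, so its primitive cannot be treated as a small perturbation globally; both proofs resolve this the same way, by starting the estimates at a time $t_0$ beyond which $|\mathcal{E}|\le\epsilon$ and sending $\epsilon\to0$ only at the end. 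What the paper's approach buys in exchange is brevity: granting standard facts about the logistic equation, its proof is a few lines.
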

\noindent
Finally, we conclude from \eqref{eq:limiteg/ro} and the above lemma that, for $H> 0$,
\begin{equation*}
 {g(t,\cdot)} \overset{L^{\infty}}{\underset{t \rightarrow + \infty}{\longrightarrow}} \dfrac{H}{\mu}  \dfrac{\bar g}{\int_{\mathcal{X}}\bar g\, dx}=\bar{g}, \quad\text{ since } \quad \int_{\mathcal{X}}\bar{g} \, dx=\dfrac{H}{\mu},
 \end{equation*}
 and for $H\leq0$, 
\begin{equation*}
 {g(t,\cdot)} \overset{L^{\infty}}{\underset{t \rightarrow + \infty}{\longrightarrow}}0.
  \end{equation*}

\end{proof}


\begin{proof}[Proof of Lemma \ref{lemme:rho}]
We split this proof into two parts, depending on the value of $r$.
\begin{itemize}
 \item If $r<0$, there exists $t_0 \in \mathbb{R}^+$ such that for all $t\leq t_0$, $r+\mathcal{E}(t)-\mu \rho(t)<\frac{r}{2}$, i.e. $\partial_t \rho(t) \leq \frac{r}{2} \rho(t)$ and we conclude with Gronwall's lemma.
\item If $r\geq 0$, fix $\varepsilon_0 >0$. there exists $t_0$ such that for all $t \geq t_0$, $|\mathcal{E}(t)|\leq \varepsilon_0$, that is
	\begin{equation*}
	 \rho(t) \left( r -\varepsilon_0 -\mu \rho(t) \right) \leq \partial_t \rho(t) \leq \rho(t) \left( r +\varepsilon_0-\mu \rho(t) \right).
	\end{equation*}
\noindent
That means
	\begin{equation*}
	 \dfrac{r-\varepsilon_0}{\mu} \leq \underset{t \rightarrow +\infty}{\liminf} \rho(t) \leq \underset{t \rightarrow +\infty}{\limsup} \rho(t) \leq \dfrac{r+\varepsilon_0}{\mu}.
	\end{equation*}
\noindent
As this is true for all $\varepsilon_0>0$, we can conclude.
\end{itemize}
\end{proof}


\section{Dimorphic population}
\label{sec:dim}

In this section we study the dynamics of a system describing a dimorphic population. The model is written
\begin{equation}
 \label{eq:systeme}
\left\{
\begin{aligned}
     \left\{
	\begin{aligned}
	&\partial_t g_1(t,x)= m_1 \Delta_x g_1(t,x)+ \left(a_1(x)-\int_{\mathcal{X}}I_{11}(y)g_1(t,y)dy  -\int_{\mathcal{X}}I_{12}(y)g_2(t,y)dy \right) g_1(t,x),\\
	&\partial_n g_1(t,x)=0, \; \forall (t,x) \in \R^+\times \partial \mathcal{X}, \\
	&  g_1(0,x)=g_1^0(x) \; \forall x \in \mathcal{X},
	\end{aligned}
     \right.\\
     \left\{
	\begin{aligned}
	&\partial_t g_2(t,x)=m_2 \Delta_x g_2(t,x)+ \left( a_2(x)-\int_{\mathcal{X}}I_{21}(y)g_1(t,y)dy-\int_{\mathcal{X}}I_{22}(y)g_2(t,y)dy \right) g_2(t,x),\\
	&\partial_n g_2(t,x)=0,\; \forall (t,x) \in \R^+\times \partial \mathcal{X},\\
	&  g_2(0,x)=g_2^0(x) \; \forall x \in \mathcal{X}.
	\end{aligned}
     \right.
\end{aligned}
\right.
\end{equation}
Here, $g_1(t,x)$ (respectively $g_2(t,x)$) denotes the density of individuals of type $1$ (respectively of type $2$) at position $x\in \CX$ and at time $t\geq 0$. As before, the diffusion of individuals in space is modeled by Laplace terms and the diffusion rates for the populations of type $1$ and $2$ are denoted respectively by  the positive constants $m_1$ and $m_2$. We represent the intrinsic growth rate of individuals of type $i$, for $i\in \{1,2\}$, by $a_i$. The last terms correspond to the mortality due to intraspecific and  interspecific competition. The functions 
$I_{ij}(y)$, for $i,j\in \{1,2\}$, represent indeed the pressure applied by individuals of type $j$ at position $y$ on individuals of type $i$ at any position.

\noindent We first identify the steady solutions of \fer{eq:systeme}. Next, we study the  stability of steady states and the long time behavior of the solutions. To this end, we will make the following assumptions on the coefficients, for $i,j\in \{1,2\}$,
\beq
\label{as:ai}
a_i\in C^{0,1}(\CX),\quad \text{and} \quad |a_i(x)|\leq a_\infty, \quad \text{for all $x\in \mathcal{X}$,}
\eeq
\begin{equation}
\label{hyp:ci}
\left\{
\begin{aligned}
&\|I_{ij}\|_{L^2}< +\infty,\quad \text{for $i,j\in \{1,2\}$,}\\
& \exists I_- >0 / \; \forall x\in \mathcal{X}, \; I_{ii}(x)\geq I_-,\quad \text{for $i=1,2$}.
\end{aligned}
\right.
\end{equation}
We also make the following assumptions on the initial conditions
\beq
\label{as:di-ini}
g_i^0\in L^2(\CX), \quad \text{for $i=1,2$}.
\eeq

\noindent
Similarly to the case of a monomorphic population, the long time behavior depends on the values of spectral parameters. To this end we use  the spectral decomposition of compact operators:
\begin{lemma}[Spectral decomposition of compact operators (see chapter VI.4 of \cite{Brezis})]
\label{spec}
For $i\in \{1,2\}$, there exists a spectral basis $(\lambda_k^i,A_k^i)_{k\geq 1}$, for the operator $\mathcal{L}^i(u)=m_i\Delta u +a_i \cdot u$   with Neumann boundary condition, that is, \\
(i) $\lambda_k^i$ is a nondecreasing sequence with $H_i:=\lambda_1^i >\lambda_2^i\geq \lambda_3^i\geq \cdots \geq \lambda_k^i\geq\cdots$ and $\lambda^i_k\to -\infty$ as $t\to \infty$.\\
(ii) $(\lambda_k^i, A_k^i)$ are eigenpairs, that is for all $k\geq 1$ and $i=1,2$,
\begin{equation}
 \label{eq:ev2}
\begin{cases}
m_i\Delta  A_k^i(x)+a_i(x)A_k^i= \lambda_k^i A_k^i,   &   \forall x\in \mathcal{X},\\
\partial_n  A_k^i(x)=0,   &   \forall  x\in \p\mathcal{X}.
\end{cases}
\end{equation}
(iii) $(A_k^i)_{k\geq 1}$ is an orthogonal basis of $L^2(\CX)$.We normalize them by 
$$
\int_{\mathcal{X}} |A_k^i(x) | dx=1.
$$
(iv) The first eigenvalue $H_i$ is simple and is given by
$$
H_i=-\min_{{\underset{u\not\equiv 0}{u \in H^1}}}  \dfrac{1}{\|u\|^2_{L^2}} \left[ \int_{\mathcal{X}} m_i |\nabla u|^2dx-\int_{\mathcal{X}}a_i(x)u(x)^2dx\right].
$$
The first eigenfunction $A^i_1$ is positive, the eigenfunctions corresponding to the other eigenvalues change sign. Those eigenfunctions are smooth.
\end{lemma}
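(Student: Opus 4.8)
The plan is to realize $\mathcal{L}^i$ as (minus) a compact self-adjoint operator by passing to its resolvent, and then to quote the cited spectral theorem (Brezis VI.4) directly. Fix $C>a_\infty$. By \fer{as:ai} the bilinear form $b(u,v)=\int_{\CX}\big(m_i\nabla u\cdot\nabla v-a_i uv+Cuv\big)dx$ is continuous and coercive on $H^1$, since $b(u,u)\geq\int_{\CX}m_i|\nabla u|^2+(C-a_\infty)u^2\,dx$. Hence, by Lax--Milgram, for every $f\in L^2$ there is a unique $u=T_if\in H^1$ with $b(T_if,v)=\int_{\CX}fv\,dx$ for all $v\in H^1$; this $u$ is the weak solution of $(C-\mathcal{L}^i)u=f$ with Neumann condition. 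The operator $T_i:L^2\to L^2$ is the object to which I would apply the spectral theorem.

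Next I would verify the hypotheses of the spectral theorem for $T_i$. Self-adjointness is immediate from the symmetry of $b$. Compactness follows from Rellich--Kondrachov: $T_i$ maps $L^2$ boundedly into $H^1$, which embeds compactly into $L^2$ on the bounded domain $\CX$. Positivity in the order-preserving sense ($f\geq 0\Rightarrow T_if\geq 0$) follows from the maximum principle applied to $C-\mathcal{L}^i$. The cited theorem then furnishes a nonincreasing sequence of eigenvalues $\mu_1\geq\mu_2\geq\cdots>0$ with $\mu_k\to 0$, together with an $L^2$-orthogonal basis of eigenfunctions $(A_k^i)$. Setting $\lambda_k^i:=C-\mu_k^{-1}$ translates these into eigenpairs of $\mathcal{L}^i$, which is (ii); the decreasing rearrangement of the $\mu_k$ and $\mu_k\to 0$ give the ordering and $\lambda_k^i\to-\infty$ of (i); and the orthogonal basis property is (iii), after rescaling each $A_k^i$ to enforce $\int_{\CX}|A_k^i|\,dx=1$, which is possible since no eigenfunction vanishes almost everywhere.

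It remains to prove (iv), which is where the sign and multiplicity information must be injected, as the bare spectral theorem gives none. For the principal pair I would invoke Krein--Rutman for the compact, positivity-preserving operator $T_i$, exactly as in Lemma \ref{lem:ev}: its spectral radius $\mu_1$ is a simple eigenvalue whose eigenfunction may be taken strictly positive in $\CX$ by the strong maximum principle, whence $H_i=\lambda_1^i$ is simple and $A_1^i>0$. The variational formula is the Courant--Fischer characterization of the top of the spectrum: since $\mathcal{L}^i$ is self-adjoint and bounded above, $H_i=\max_{u\in H^1,\,u\not\equiv 0}\frac{1}{\|u\|_{L^2}^2}\int_{\CX}\big(a_iu^2-m_i|\nabla u|^2\big)dx$, which is the stated identity. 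That every other eigenfunction changes sign is then forced by orthogonality: $\int_{\CX}A_k^iA_1^i\,dx=0$ for $k\geq 2$ with $A_1^i>0$ precludes $A_k^i$ from having a constant sign. Finally, smoothness follows by elliptic regularity: each $A_k^i\in H^1$ solving \eqref{eq:ev2} lies in $H^2$, and bootstrapping with $a_i\in C^{0,1}$ and $\partial\CX$ of class $C^3$ raises the regularity to at least $C^2(\bar{\CX})$.

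The main obstacle is the injection of positivity and simplicity in (iv): the spectral theorem alone produces an orthogonal eigenbasis with no control on signs or on the multiplicity of the top eigenvalue, so the essential extra input is the Krein--Rutman theorem together with the strong maximum principle (and unique continuation), applied to the positivity-preserving resolvent $T_i$. Everything else reduces to the routine translation between $T_i$ and $\mathcal{L}^i$ and to standard elliptic regularity.
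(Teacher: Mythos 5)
Your proof is correct and follows exactly the route the paper intends by its citations: the paper gives no proof of Lemma \ref{spec}, delegating it to the spectral decomposition of compact self-adjoint operators (Brezis, ch.\ VI.4) together with the Krein--Rutman argument already invoked for Lemma \ref{lem:ev}, and your resolvent construction $T_i=(C-\mathcal{L}^i)^{-1}$, spectral theorem, Krein--Rutman for the principal pair, orthogonality for the sign change, and elliptic regularity are precisely the standard implementation of those references.
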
  

\noindent Let us also introduce the following notation for $i,j\in \{1,2\}$
\begin{equation}
\label{def:mu}
 \mu_{ij}=\int_{\mathcal{X}} I_{ij}(x) {A_1^j}(x) dx.
 \end{equation}

\noindent From \eqref{hyp:ci}, we notice that $\mu_{11}\neq 0$ and $\mu_{22}\neq 0$. Moreover, let us remark that, in the particular case where the interaction kernels are constant, i.e.  $I_{ij} \equiv\bar I_{ij}$ (homogeneous environment), then $\mu_{ij}= \bar I_{ij}$.\\
Finally, we make the following assumption on the variables $\mu_{ij}$ to exclude a degenerate case
\beq
\label{as:det}
\mu_{11}\mu_{22}-\mu_{12}\mu_{21}\neq 0.
\eeq

\subsection{Studies of the steady states}

We first identify the steady states of the equation \eqref{eq:systeme}.
\begin{lemma}
\label{lemme:etatstationnaire}
Assume \fer{as:ai}, \fer{hyp:ci} and \fer{as:det}. Then, the only non-negative steady states of the equation \eqref{eq:systeme} are
   \begin{itemize}
\item the trivial steady state $(0,0)$,
\item the state $(\bar{g}_1,0)$  with  $\bar{g}_1=\f{H_1}{\mu_{11}}A_1^1$, which is non-negative and non-trivial if and only if $H_1>0$, 
\item the state $(0,\bar{g}_2)$ with $\bar{g}_2=\f{H_2}{\mu_{22}}A_1^2$, which is non-negative and non-trivial if and only if $H_2>0$, 
\item and the state $(\hat{g}_1, \hat{g}_2)$ where $\hat{g}_1=r_1A_1^1$ and $\hat{g}_2=r_2A_1^2$, with $r_1$ and $r_2$ positive constants satisfying
\begin{equation}
\label{H1H2}
 \left( \begin{array}{c}
 H_1\\
 H_2
 \end{array}
 \right)= \left( \begin{array}{cc}
 \mu_{11}& \mu_{12}\\
 \mu_{21}&\mu_{22}
  \end{array}
 \right)\left( \begin{array}{c}
 r_1\\
 r_2
 \end{array}
 \right).
\end{equation}
This steady state exists if and only if $H_1>0$, $H_2>0$ and $(H_2 \mu_{11}-H_1 \mu_{21})(H_1 \mu_{22}-H_2 \mu_{12})>0$.
\end{itemize}
\end{lemma}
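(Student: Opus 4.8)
The plan is to exploit that at a steady state the two nonlocal competition integrals are \emph{constants}, independent of $x$. Given a non-negative steady solution $(g_1,g_2)$ of \eqref{eq:systeme}, I set
\[
c_1 = \int_{\CX} I_{11}(y)g_1(y)\,dy + \int_{\CX} I_{12}(y)g_2(y)\,dy,\qquad
c_2 = \int_{\CX} I_{21}(y)g_1(y)\,dy + \int_{\CX} I_{22}(y)g_2(y)\,dy,
\]
so that the stationary system decouples into the two linear problems $m_i\Delta g_i + a_i g_i = c_i g_i$ on $\CX$ with $\p_n g_i = 0$, for $i=1,2$. Consequently each $g_i$ is either identically zero or an eigenfunction of the operator $\mathcal{L}^i$ associated with the eigenvalue $c_i$.

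Next I would use the sign structure provided by Lemma \ref{spec}: the only eigenfunction of $\mathcal{L}^i$ of constant sign is the principal one $A_1^i$, every other eigenfunction changing sign. Hence if $g_i\ge 0$ is non-trivial, the strong maximum principle makes it strictly positive, so it cannot change sign and must be a positive multiple of $A_1^i$; this forces $c_i = H_i$ and $g_i = r_i A_1^i$ with $r_i>0$. The four steady states then correspond precisely to the four choices of whether each $g_i$ vanishes or not, and it remains to compute $c_i$ and the admissible $r_i$ in each case.

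For the semi-trivial case $g_2\equiv 0$, $g_1 = r_1 A_1^1$, substituting into $c_1$ and using the normalization $\int_{\CX} A_1^1\,dx = 1$ together with notation \eqref{def:mu} gives $c_1 = r_1\mu_{11}$; with $c_1 = H_1$ this yields $r_1 = H_1/\mu_{11}$, which is positive if and only if $H_1>0$ since $\mu_{11}>0$ by \eqref{hyp:ci}. The state $(0,\bar g_2)$ is handled symmetrically. In the coexistence case $g_i = r_i A_1^i$ with $c_i = H_i$, the same substitution turns $c_i = r_1\mu_{i1} + r_2\mu_{i2}$ into exactly the linear system \eqref{H1H2}; by \eqref{as:det} its matrix is invertible, so Cramer's rule produces unique $r_1, r_2$, and the remaining work is to decide when both are strictly positive.

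The main obstacle is this last sign analysis. Cramer's rule gives
\[
r_1 r_2 = \frac{(H_1\mu_{22}-H_2\mu_{12})(H_2\mu_{11}-H_1\mu_{21})}{(\mu_{11}\mu_{22}-\mu_{12}\mu_{21})^2},
\]
so the stated condition $(H_2\mu_{11}-H_1\mu_{21})(H_1\mu_{22}-H_2\mu_{12})>0$ says precisely that $r_1$ and $r_2$ share a common sign. To upgrade \emph{same sign} to \emph{both positive} I would combine $H_1,H_2>0$ with the non-negativity $\mu_{12},\mu_{21}\ge 0$ coming from the non-negativity of the competition kernels: were both $r_i$ negative, then $H_1 = r_1\mu_{11}+r_2\mu_{12}$ would be the sum of a strictly negative and a non-positive term, contradicting $H_1>0$. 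Conversely, if $r_1,r_2>0$ then $H_i = r_1\mu_{i1}+r_2\mu_{i2}>0$ and the product is manifestly positive, which gives the claimed equivalence. I expect the only delicate point to be making this use of the non-negativity of the interspecific coefficients explicit, as it is exactly what rules out the spurious branch where both $r_i<0$.
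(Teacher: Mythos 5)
Your proof is correct and takes essentially the same route as the paper's: at a steady state the nonlocal terms are constants, so the spectral decomposition of Lemma \ref{spec} forces each nontrivial non-negative component to be a positive multiple of the principal eigenfunction with eigenvalue $H_i$, and the coexistence state reduces to the sign analysis of the $2\times 2$ system \eqref{H1H2}. The only minor divergence is in that last algebraic step---the paper instead writes the determinant as $\mu_{11}\mu_{22}-\mu_{12}\mu_{21}=\frac{\mu_{11}}{H_1}\left(H_1\mu_{22}-H_2\mu_{12}\right)+\frac{\mu_{12}}{H_1}\left(H_2\mu_{11}-H_1\mu_{21}\right)$ and compares signs---and both your exclusion of the branch $r_1,r_2<0$ and the paper's identity rely on $\mu_{12},\mu_{21}\ge 0$, i.e.\ on the non-negativity of the interspecific kernels, which is implicit in the paper (assumption \eqref{hyp:ci} literally only gives an $L^2$ bound on $I_{12},I_{21}$); you were right to single this out as the delicate point.
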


\medskip
\begin{proof}
The conditions on the existence of the three first steady states are immediate from Theorem \ref{theo:stationnaire}. Moreover, it follows from Lemma \ref{spec} that there is no nonnegative steady state other than the ones stated above. We only prove the last point corresponding to the steady state with two positive exponents.\\
We first suppose that $r_1$ and $r_2$ given by \fer{H1H2} are positive. It is then easy to verify, from \fer{eq:ev2} and \fer{def:mu}, that $(r_1A_1^1,r_2A_1^2)$ is a steady solution of \fer{eq:systeme}.\\
We next notice using \fer{as:det} that the matrix in \fer{H1H2} is invertible, and $r_1$ and $r_2$ are positive if and only if 
\begin{equation*}
 r_i=\dfrac{H_i \mu_{jj}-H_j \mu_{ij}}{\mu_{jj}\mu_{ii}-\mu_{ji}\mu_{ij}}>0, \text{ for } (i,j)\in\{(1,2),(2,1)\}.
\end{equation*}
This is equivalent to $(H_2 \mu_{11}-H_1 \mu_{21})(H_1 \mu_{22}-H_2 \mu_{12})>0$ and $H_1,H_2>0$: indeed, if $H_2 \mu_{11}-H_1 \mu_{21}$ and $H_1 \mu_{22}-H_2 \mu_{12}$ have the same sign, then
\begin{equation*}
\mu_{11}\mu_{22}-\mu_{12}\mu_{21}=\dfrac{\mu_{11}}{H_1} \left( H_1 \mu_{22}-H_2 \mu_{12}\right)+\dfrac{\mu_{12}}{H_1} \left( H_2 \mu_{11}-H_1 \mu_{21}\right),
\end{equation*}
has also the same sign if and only if $H_1>0$. We conclude easily.

\end{proof}

\subsection{Long time behavior of the  system}

Our main results concern the long time behavior of solutions of \eqref{eq:systeme}. We give explicit conditions determining whether or not the population goes extinct or whether or not there is co-existence of the two types at equilibrium. The first theorem shows the convergence of the solution when time goes to infinity and gives sufficient conditions for convergence  to the  globally asymptotically stable states. The second theorem explores the more delicate cases, where there are several stable equilibria and different basins of attraction. 

\begin{theo}
\label{theo:main}
Assume \fer{as:ai}, \fer{hyp:ci}, \fer{as:di-ini} and \fer{as:det}.
 \begin{enumerate}
  \item \label{item:convergence} For any initial condition, as $t\to \infty$, the unique solution of the parabolic system  \eqref{eq:systeme} tends to one of the steady states described in Lemma \ref{lemme:etatstationnaire}.
  \item \label{item:00} If $H_1 \leq 0$ and $H_2 \leq 0$ then for any initial condition and as $t\to \infty$, the solution of \eqref{eq:systeme} tends to $(0,0)$, i.e. the population goes extinct. 

  \item \label{item:g10gas} If
	\begin{equation*}
	 H_1 > 0 , \;  H_2\mu_{11}-H_1\mu_{21} \leq 0 \text{ and } H_1\mu_{22}-H_2\mu_{12} >0,
	\end{equation*}
	then for any initial condition such that $g_1^0$ is not identically zero and as $t\to \infty$, the solution converges to $(\bar{g}_1,0)$. We thus have fixation of type 1 in the population.
	
  \item \label{item:0g2}  If
	\begin{equation*}
	 H_2 > 0 , \;  H_2\mu_{11}-H_1\mu_{21} > 0 \text{ and } H_1\mu_{22}-H_2\mu_{12} \leq0,
	\end{equation*}
	then for any initial condition such that $g_2^0$ is not identically zero and as $t\to \infty$, the solution converges to  $(0, \bar{g}_2)$. We thus have fixation of type 2 in the population.

  \item \label{item:coexistence} If
	\begin{equation*}
	 H_1 >0, \; H_2 >0, \; H_2\mu_{11}-H_1\mu_{21} > 0 \text{ and } H_1\mu_{22}-H_2\mu_{12} >0,
	\end{equation*}
	then for any initial condition such that $g_1^0$ and $g_2^0$ are not identically zero and as $t\to \infty$, the  solution converges to  $(\hat{g}_1, \hat{g}_2)$, i.e. we have co-existence of types $1$ and $2$.

 \end{enumerate}

\end{theo}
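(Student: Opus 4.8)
The plan is to reduce the full parabolic system to a finite-dimensional Lotka–Volterra ODE system by tracking the evolution of the two populations projected onto their respective principal eigenfunctions, exploiting the same change-of-variables trick used in the monomorphic case (Theorem \ref{theo:convmono}). Concretely, I would first set $\rho_i(t)=\int_{\CX} g_i(t,y)\,dy$ for $i=1,2$ and perform the exponential change of unknown $v_i(t,x)=g_i(t,x)\exp\!\big(\int_0^t(\int_{\CX}I_{i1}g_1+\int_{\CX}I_{i2}g_2)\,ds\big)$, so that each $v_i$ solves the \emph{linear} heat equation $\partial_t v_i=\mathcal{L}^i v_i$ with Neumann conditions. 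The spectral decomposition of Lemma \ref{spec} together with the regularizing effect of the Laplacian then forces $v_i(t,\cdot)e^{-H_i t}$ to converge uniformly to a positive multiple $\beta_i A_1^i$ of the principal eigenfunction whenever the projection onto $A_1^i$ is nonzero (which holds once $g_i^0\not\equiv 0$, since a positive solution becomes strictly positive by the strong maximum principle and hence has positive overlap with the positive eigenfunction $A_1^i$). Dividing by the integrated version exactly as in \eqref{eq:limiteg/ro} yields the crucial \textbf{profile separation}: $g_i(t,\cdot)/\rho_i(t)\to A_1^i/\!\int_{\CX}A_1^i$ in $L^\infty$.

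Once the spatial profiles are slaved to $A_1^i$, the dynamics is governed entirely by the two scalars $\rho_1,\rho_2$. Integrating each equation of \eqref{eq:systeme} over $\CX$ and using the profile convergence (so that $\int_{\CX}a_i g_i/\rho_i\to H_i$ and $\int_{\CX}I_{ij}g_j/\rho_j\to \mu_{ij}$, by the definition \eqref{def:mu} of $\mu_{ij}$ and the identity $\int a_i A_1^i=H_i\int A_1^i$), I would show that $(\rho_1,\rho_2)$ solves a perturbed competitive Lotka–Volterra system
\begin{equation*}
\frac{d}{dt}\rho_i(t)=\rho_i(t)\Big(H_i+\mathcal{D}_i(t)-\mu_{i1}\rho_1(t)-\mu_{i2}\rho_2(t)\Big),\qquad \mathcal{D}_i(t)\xrightarrow[t\to\infty]{}0.
\end{equation*}
A preliminary uniform bound $\sup_t\rho_i(t)<\infty$, obtained from the comparison $\frac{d}{dt}\rho_i\le(a_\infty-I_-\rho_i)\rho_i$ using the lower bound $I_{ii}\ge I_-$ from \eqref{hyp:ci}, legitimates the claim $\mathcal{D}_i(t)\to 0$. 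The four sign conditions in cases \ref{item:00}–\ref{item:coexistence} are precisely the standard trichotomy conditions guaranteeing that the limiting $2\times2$ competitive Lotka–Volterra system has a unique globally attracting equilibrium on the relevant boundary or interior of the positive quadrant, and the equilibrium values for $(\rho_1,\rho_2)$ match $(\frac{H_1}{\mu_{11}},0)$, $(0,\frac{H_2}{\mu_{22}})$, or the solution $(r_1,r_2)$ of \eqref{H1H2}, thereby reproducing the steady states of Lemma \ref{lemme:etatstationnaire}. Multiplying the limit of $\rho_i$ by the limiting profile $A_1^i/\!\int A_1^i$ recovers $L^\infty$ convergence of $g_i$ to the asserted steady state, and part \ref{item:convergence} follows since every remaining initial configuration (one or both populations trivial) reduces to the monomorphic Theorem \ref{theo:convmono} or to the trivial state.

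The main obstacle, and the step deserving the most care, is upgrading the asymptotic \textbf{nonautonomous} Lotka–Volterra convergence from the clean autonomous textbook version to the perturbed system with the error terms $\mathcal{D}_i(t)\to0$, especially in the boundary cases \ref{item:g10gas} and \ref{item:0g2} where one must prove that the vanishing component genuinely dies out rather than being rescued by the perturbation. A subtlety is circular: the profile separation $g_i/\rho_i\to A_1^i/\!\int A_1^i$ is needed to derive the ODE, yet that separation presupposes $\rho_i(t)$ does not collapse to zero prematurely along a subsequence (otherwise $g_i/\rho_i$ is ill-defined); I would resolve this by treating the cases where a population survives and where it goes extinct separately, and in the extinction cases invoking the linear spectral estimate on $v_i$ directly to conclude $g_i\to 0$ without needing the normalized profile at all. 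For the surviving components the argument proceeds exactly as in the monomorphic proof via Lemma \ref{lemme:rho}, suitably generalized to the coupled two-variable competitive system using the sign hypotheses to rule out oscillation and to pin down the attracting equilibrium through $\liminf/\limsup$ trapping arguments analogous to the one in the proof of Lemma \ref{lemme:rho}.
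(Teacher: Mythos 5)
Your reduction is exactly the paper's: the exponential change of unknown $v_i$, the spectral profile separation \eqref{limit:gi/intgi}, integration of \eqref{eq:systeme} to obtain the perturbed system \eqref{eq:rho1rho2} with $\mathcal{D}_i(t)\to 0$, and boundedness of $\rho_i$ via $I_{ii}\geq I_-$. (Incidentally, your circularity worry is a false alarm: $g_i(t,\cdot)/\rho_i(t)=v_i(t,\cdot)/\int_{\CX}v_i(t,y)dy$, since numerator and denominator carry the same exponential factor, so the linear spectral theory controls this ratio whether or not $\rho_i$ collapses.) Items \ref{item:00}--\ref{item:coexistence} are then correctly reduced to convergence of the perturbed Lotka--Volterra system under the stated sign conditions, which is the content of the paper's Lemma \ref{lemme:rho1rho2}; your ``$\liminf$/$\limsup$ trapping'' sketch is in the same spirit as the paper's proof by region splitting ($D_1,\dots,D_5$ and the nested trapping sets $D_{x_n}$), although the boundary cases where $H_2\mu_{11}-H_1\mu_{21}=0$ or $H_1\mu_{22}-H_2\mu_{12}=0$ (included in items \ref{item:g10gas} and \ref{item:0g2} through the ``$\leq$'' signs) are left entirely unproved and require the more delicate construction of the paper's Case 3, where the sequence of trapping sets may be infinite.

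The genuine gap is in item \ref{item:convergence}. You claim it ``follows since every remaining initial configuration (one or both populations trivial) reduces to the monomorphic Theorem \ref{theo:convmono} or to the trivial state'', but what remains after items \ref{item:00}--\ref{item:coexistence} are not initial configurations: they are parameter regimes. The sign conditions of items \ref{item:00}--\ref{item:coexistence} do not exhaust the possible values of $(H_1,H_2,\mu_{ij})$: the bistable regime $H_1>0$, $H_2>0$, $H_2\mu_{11}-H_1\mu_{21}<0$, $H_1\mu_{22}-H_2\mu_{12}<0$, and the degenerate regimes where one of these quantities vanishes while the other is negative, are missing. In those regimes the limiting autonomous system has no globally attracting equilibrium, so the ``standard trichotomy'' you invoke does not apply; one must prove that every trajectory of the nonautonomous system still converges to \emph{some} equilibrium (which one depends on the initial data and on the perturbations $\mathcal{D}_i$). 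This is exactly the content of the last two bullets of Lemma \ref{lemme:rho1rho2}, proved in the paper's Case 4 by a dichotomy between eventual trapping in $D_\varepsilon$ near $(0,H_2/\mu_{22})$ and trapping in a nested sequence of sets forcing convergence to $(H_1/\mu_{11},0)$, and it is arguably the hardest part of the whole argument. Without it, your proposal establishes item \ref{item:convergence} only under the hypotheses of items \ref{item:00}--\ref{item:coexistence}, not for all admissible parameters.
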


\bigskip
\noindent Before writing the second theorem which deals with the other values of $H_1$ and $H_2$, we illustrate this first theorem with some numerical examples. The numerics are computed with an algorithm based on finite difference method. Our aim is to illustrate the behavior of mutant individuals that appear in a well established population.\\
Fisrt of all, thanks to Theorem \ref{theo:convmono}, we have a mean to compute the principal eigenvalues $H_i$ when they are positive. In fact, in this case, Theorem \ref{theo:convmono} guarantees that any positive solution to $\partial_t u=m_i\Delta_x(u)+\big(a_i-\int_{\mathcal{X}}I_{ii}u\big) u$ with Neumann boundary conditions tends to the steady state $\bar{g}_i$, and 
\begin{equation}
\label{findH}
H_i=\int_{\mathcal{X}}I_{ii}(x) \bar{g}_i(x)dx.
\end{equation}
\noindent Thus, with the finite difference method, we resolve numerically the previous parabolic equation. After a long time, the solution is stable, so we consider that it has reached the steady state. We calculate then $H_i$ thanks to the simple formula \eqref{findH}.
With same ideas, we can also calculate $\mu_{i,j}$ for $i,j\in \{1,2\}$. Thus we can check the conditions of Theorem \ref{theo:main} for the following numerical examples, the values are presented on figure \ref{fig:simu}.\\

\noindent Let us now describe our numerical simulations. We consider that the growth rates of the two populations are maximal at two different spatial positions. For instance, the space state can represent a variation of resources, as seed size for some birds, and so the two populations are not best-adapted to same resources. 
Different values of $\bar{a}_2$, the maximum of the growth rate of the mutant population, will be explored, while the other parameters are fixed,
\beq
\label{data1}
\begin{array}{ccc}
&&\mathcal{X}=[0,1], \quad u_1=0.3, \: \bar{a}_1=1, \quad u_2=0.5,\\
&& a_i(x)= \max \{ \bar{a}_i(1-20(x-u_i)^2), -1\}.
\end{array}
\eeq
Notice that around the trait $u_i$, the growth rate of the population $i$ is positive but far from this position, it becomes negative. Thus positions around $u_i$ are favorable for population $i$, and we suppose that the intraspecific competition is greater around that position:
\beq
\label{data2}
I_{ii}(x)= \begin{cases}
            1, \quad \text{if } |x-u_i|<0.25,\\
	    0.1, \quad \text{else}.
           \end{cases}
\eeq
Then, we define the interspecific competition from the previous kernels by $I_{12}=I_{21}=\min \{ I_{11}, I_{22} \}$. Finally, we suppose that all individuals move with the same diffusion constant $m_1=m_2=0.01$.\\
As we want to illustrate the invasion of a mutant, we suppose that the initial condition is near $(\bar{g_1},0)$, as presented in figure \ref{fig:simu}(a). We resolve numerically the system of parabolic equations \eqref{eq:systeme} and present the solution after a long time, that is, when the densities are almost stable, see figure \ref{fig:simu}.
\begin{figure}[!h]
\begin{center}
\includegraphics[scale=0.33]{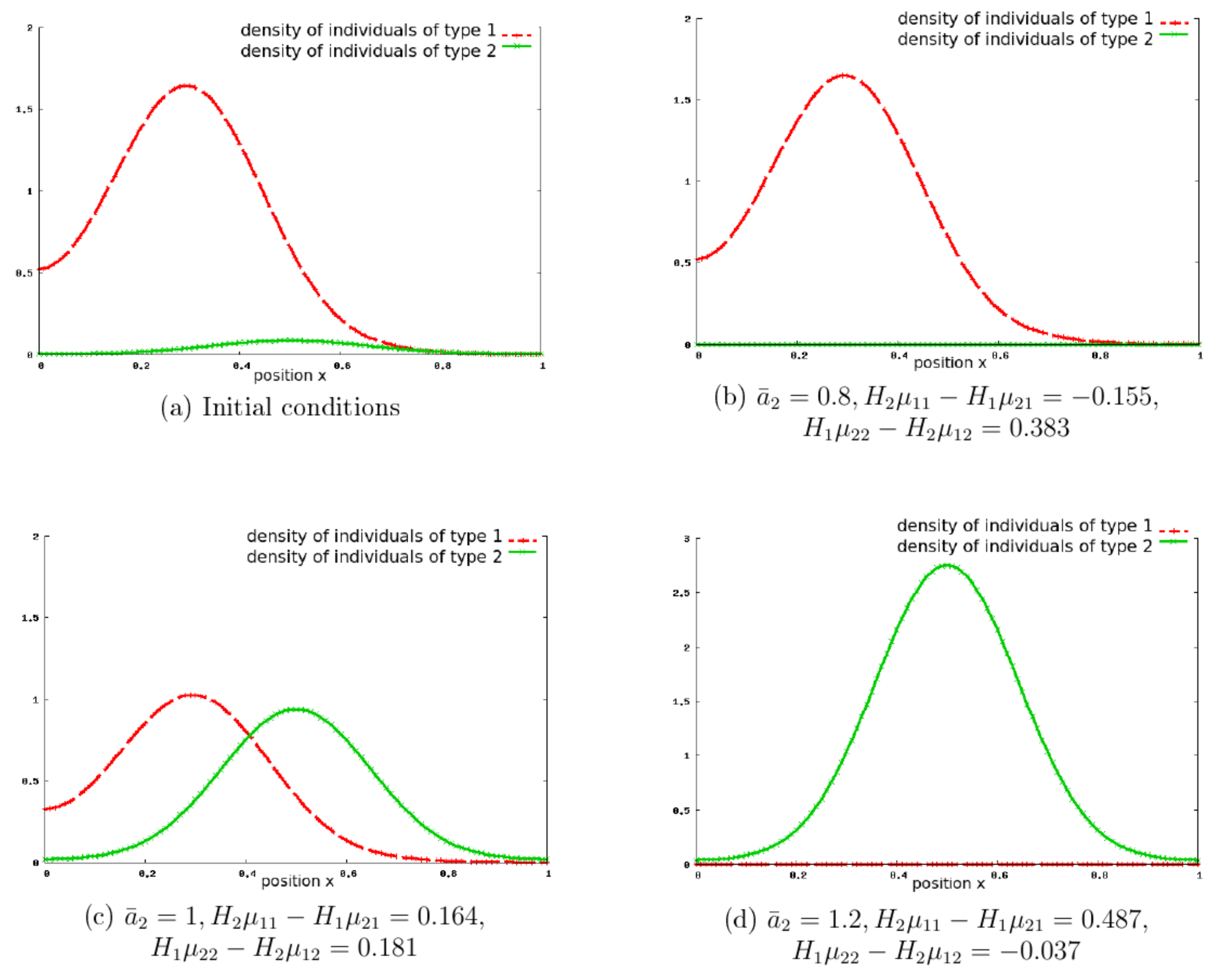}
 \caption{\label{fig:simu} \small{ The numerical resolution of \fer{eq:systeme} with parameters given by \fer{data1}--\fer{data2}. (a) presents the densities of each population initially. (b), (c) and (d) present the densities at time $t=1000$ for different values of $\bar{a}_2$, the red dashed curves represent the density of the resident population and the green ones the density of the mutants.}}
\end{center}
\end{figure}
\noindent
When $\bar{a}_2$ is small, the mutant population is not able to survive (case (b)). But when $\bar{a}_2$ is big enough, coexistence (case (c)) and even invasion (case (d)) can appear. On the two last cases, the new population that has invaded the space does not live on the same spatial position as the previous one. From an ecological viewpoint, such examples are very interesting because we observe a change of spatial niche due to a selection event.\\

\bigskip \noindent
Finally, to give a complete picture of the long time behavior of the solution, let us now study the last cases where several equilibria can be reached.

\begin{theo}
\label{theo:special}
Assume \fer{as:ai}, \fer{hyp:ci}, \fer{as:di-ini} and \fer{as:det}.
 \begin{enumerate}
  \item \label{item:special1} If 
	\begin{equation*}
	 \begin{aligned}
	  &H_1 > 0\;, \; H_2 >0, \; H_2\mu_{11}-H_1\mu_{21} <0,  \text{ and } H_1\mu_{22}-H_2\mu_{12}<0,
	 \end{aligned}
	\end{equation*}
	then the  steady states $(\bar{g}_1,0)$ and  $(0, \bar{g}_2)$ are both asymptotically stable and  $(\hat{g}_1, \hat{g}_2)$ is unstable. Nevertheless some solutions will converge to the latter. 
	
	 \item \label{item:special2} If 
	\begin{equation*}
	 \begin{aligned}
	  &H_1 > 0\;, \; H_2 >0, \; H_2\mu_{11}-H_1\mu_{21} <0, \text{ and }  H_1\mu_{22}-H_2\mu_{12}=0,
	 \end{aligned}
	\end{equation*}
	then the steady state $(\bar{g}_1,0)$ is asymptotically stable    and $(0, \bar{g}_2)$ is unstable. Nevertheless some solutions will converge to the latter. 
	
	 \item \label{item:special3} If 
	\begin{equation*}
	 \begin{aligned}
	  &H_1 > 0\;, \; H_2 >0, \; H_2\mu_{11}-H_1\mu_{21} =0, \text{ and }  H_1\mu_{22}-H_2\mu_{12}<0,
	 \end{aligned}
	\end{equation*}
	then the steady state $(\bar{g}_2,0)$ is asymptotically stable and $(0, \bar{g}_1)$ is unstable. Nevertheless some solutions will converge to the latter. 
	\end{enumerate}
\end{theo}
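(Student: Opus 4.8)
The plan is to reduce the parabolic system to a planar competitive Lotka--Volterra system for the total masses and then to read off each case from a phase-plane analysis adapted to the degenerate configurations. Following the change of variables already used in the monomorphic case (Theorem~\ref{theo:convmono}), I would set, for $i\in\{1,2\}$,
\begin{equation*}
v_i(t,x)=g_i(t,x)\exp\!\left(\int_0^t\Big(\int_{\mathcal{X}}I_{i1}(y)g_1(s,y)\,dy+\int_{\mathcal{X}}I_{i2}(y)g_2(s,y)\,dy\Big)ds\right),
\end{equation*}
which is legitimate because each competition integral is spatially constant and hence does not affect the spatial profile. Each $v_i$ then solves the linear equation $\partial_t v_i=m_i\Delta v_i+a_iv_i$ with Neumann conditions, so by the spectral decomposition of Lemma~\ref{spec} the rescaled profiles $g_i(t,\cdot)/\rho_i(t)$ converge uniformly to the principal eigenfunction $A_1^i$, where $\rho_i(t)=\int_{\mathcal{X}}g_i(t,y)\,dy$. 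Integrating \eqref{eq:systeme} over $\mathcal{X}$ and using this profile convergence to replace $\int I_{ij}g_j$ by $\mu_{ij}\rho_j+o(\rho_j)$ yields the asymptotically autonomous system
\begin{equation*}
\frac{d}{dt}\rho_i(t)=\rho_i(t)\big(H_i-\mu_{i1}\rho_1(t)-\mu_{i2}\rho_2(t)+\mathcal{E}_i(t)\big),\qquad \mathcal{E}_i(t)\xrightarrow[t\to\infty]{}0,
\end{equation*}
whose equilibria are exactly the four states of Lemma~\ref{lemme:etatstationnaire} through the correspondence $\bar g_1\leftrightarrow(H_1/\mu_{11},0)$, $\bar g_2\leftrightarrow(0,H_2/\mu_{22})$ and $(\hat g_1,\hat g_2)\leftrightarrow(r_1,r_2)$. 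This is precisely the reduction carried out for Theorem~\ref{theo:main}, so I would invoke it here.

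I would then analyze the limiting planar system. Linearizing at $(H_1/\mu_{11},0)$ produces the transversal eigenvalue $(H_2\mu_{11}-H_1\mu_{21})/\mu_{11}$ and at $(0,H_2/\mu_{22})$ the transversal eigenvalue $(H_1\mu_{22}-H_2\mu_{12})/\mu_{22}$, the other eigenvalue being $-H_1<0$, respectively $-H_2<0$. In case~\ref{item:special1} both transversal eigenvalues are negative, so $(\bar g_1,0)$ and $(0,\bar g_2)$ are both locally asymptotically stable; meanwhile the interior equilibrium exists by the product condition of Lemma~\ref{lemme:etatstationnaire}, and its Jacobian has determinant $\hat\rho_1\hat\rho_2(\mu_{11}\mu_{22}-\mu_{12}\mu_{21})$, which the sign hypotheses force to be negative, so it is a hyperbolic saddle. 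Ruling out periodic orbits by a Dulac argument with multiplier $1/(\rho_1\rho_2)$ and appealing to the theory of asymptotically autonomous planar flows (Poincaré--Bendixson together with the Markus--Thieme reduction), every trajectory converges to an equilibrium; the one-dimensional stable manifold of the saddle is the separatrix, along which trajectories converge to $(\hat g_1,\hat g_2)$, while all others converge to one of the two stable axial states, which is the assertion.

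In cases~\ref{item:special2} and~\ref{item:special3} the interior equilibrium collides with an axial one. For case~\ref{item:special2}, $H_1\mu_{22}-H_2\mu_{12}=0$ gives $r_1=(H_1\mu_{22}-H_2\mu_{12})/\det=0$, so $(\hat g_1,\hat g_2)$ degenerates into $(0,\bar g_2)$, whose transversal eigenvalue vanishes, while $H_2\mu_{11}-H_1\mu_{21}<0$ keeps $(\bar g_1,0)$ asymptotically stable. Here I would use that the axis $\{g_1\equiv0\}$ is invariant and that on it the dynamics reduces exactly to the monomorphic equation, so by Theorem~\ref{theo:convmono} any solution with $g_1^0\equiv0$ converges to $(0,\bar g_2)$, showing this state is reachable; its instability then follows by showing that for every datum with $g_1^0\not\equiv0$ the mass $\rho_1$ is driven upward and the trajectory escapes toward $(\bar g_1,0)$. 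Case~\ref{item:special3} is identical after exchanging the roles of the two types (here $r_2=0$, so $(\bar g_1,0)$ is the degenerate, unstable state and $(0,\bar g_2)$ is stable, up to the evident relabeling in the statement).

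I expect the genuine obstacle to be this degenerate equilibrium in cases~\ref{item:special2}--\ref{item:special3}: its transversal eigenvalue being exactly zero, hyperbolic linearization is unavailable and one must control the sign of the quadratic correction (via a center-manifold or a comparison argument), all for an asymptotically autonomous system whose limit is itself non-hyperbolic there. A robust way around this is to exploit that a planar competitive system is monotone for the order induced by $(\rho_1,-\rho_2)$, whence every bounded orbit converges to an equilibrium irrespective of hyperbolicity; combined with the boundedness of $\rho_i$ guaranteed by the intraspecific lower bound $I_{ii}\geq I_-$ exactly as in Theorem~\ref{theo:convmono}, this closes the remaining cases.
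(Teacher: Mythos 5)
The central gap is that ``asymptotically stable'' in Theorem \ref{theo:special} is a statement about the PDE system \eqref{eq:systeme} with respect to perturbations in $L^2\times L^2$, not about the limiting planar system for the masses. Your reduction to $\rho_i'=\rho_i\left(H_i+\mathcal{E}_i(t)-\mu_{i1}\rho_1-\mu_{i2}\rho_2\right)$ is indeed the one used for Theorem \ref{theo:main}, but the error terms $\mathcal{E}_i(t)$ are solution-dependent: they become small only after the normalized profiles $g_i/\rho_i$ have relaxed to $A_1^i$, and the length of that transient is governed by the ratio of the higher spectral modes to the principal mode in the initial data, which is \emph{not} controlled by $\|g_2^0\|_{L^2}$ being small. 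So ``both transversal eigenvalues of the limit system are negative'' does not give Lyapunov stability of $(\bar g_1,0)$ for the PDE: during the transient the $\mathcal{E}_i$ are merely $O(1)$ and can push the mass trajectory out of any prescribed neighborhood, and in case \ref{item:special1} Lemma \ref{lemme:rho1rho2} (last bullet) only asserts that \emph{some} limit among the three nonzero states is reached, without linking the limit to the initial data. The paper fills exactly this hole with a genuinely infinite-dimensional argument: expand $g_1$ in the basis $(A_k^1)_{k\geq 1}$, control $\|g_2(t,\cdot)\|_{L^2}$ by multiplying the $g_2$-equation by $g_2$ and using the variational characterization of $H_2$ (so that no information on the \emph{profile} of $g_2$ is needed, only its $L^2$ norm), and close a bootstrap/contradiction argument under the compatibility condition \eqref{hyp1C}, which ties the admissible perturbation size to the spectral gap $H_1-\lambda_2^1$ and to $\frac{\mu_{21}}{\mu_{11}}H_1-H_2$. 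None of this can be recovered from the phase portrait of the limit system alone.

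The second gap concerns the claims that some solutions converge to the unstable state, and the instability assertions in the degenerate cases \ref{item:special2}--\ref{item:special3}. For case \ref{item:special1} you invoke the stable manifold of the saddle of the \emph{limit} system, but Markus--Thieme theory for asymptotically autonomous flows does not produce trajectories of the perturbed system converging to a saddle of its limit; and even if it did, you would still have to exhibit an actual solution of \eqref{eq:systeme} realizing that mass trajectory. The paper's device --- missing from your proposal precisely where it is most needed --- is the exactly invariant subspace $\mathrm{vect}(A_1^1)\times\mathrm{vect}(A_1^2)$: for initial data $(\alpha_0\hat g_1,\beta_0\hat g_2)$ the PDE reduces \emph{exactly}, with no error terms, to an autonomous planar Lotka--Volterra system, to which Hartman--Grobman applies at the interior saddle (case \ref{item:special1}), and whose degenerate classification (cited from Champagnat's thesis) shows that every solution with $\alpha(0)>0$ converges to $(H_1/\mu_{11},0)$ in case \ref{item:special2}, yielding instability of $(0,\bar g_2)$ despite the vanishing transversal eigenvalue --- thereby sidestepping entirely the center-manifold difficulty you flag. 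Your proposed fixes for that difficulty (center manifold, or convergence of bounded orbits of planar competitive systems) are theorems about autonomous dynamics and would require substantial additional work for the non-autonomous mass system. You do correctly use the invariant axis $\{g_1\equiv 0\}$ together with Theorem \ref{theo:convmono} to produce solutions converging to $(0,\bar g_2)$, which matches the paper; but your instability argument there, ``$\rho_1$ is driven upward for every datum with $g_1^0\not\equiv 0$,'' is a restatement of what must be proved, at exactly the point where linearization is inconclusive.
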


\begin{remark}
One can verify that, excluding a degenerate case by \fer{as:det}, all the possible values of $(H_1,H_2)\in \R^2$  are covered by the statements of Theorems \ref{theo:main} and \ref{theo:special}.
\end{remark}

\section{Stability of the steady states -The proofs of Theorems \ref{theo:main} and \ref{theo:special}}
\label{sec:proof}

\bigskip \noindent
This section is devoted to the proofs of Theorems \ref{theo:main} and \ref{theo:special}.

\bigskip \noindent
\subsection{ The proof of Theorem \ref{theo:main}}
In this section, we prove Theorem \ref{theo:main}. To this end,  noticing that the total density of the population is not constant, we will first study the limit of population densities normalized by the masses. Then we will study the long time behavior of a system of differential equations which describes the dynamics of the two masses (see Lemma \ref{lemme:rho1rho2}).

\bigskip \noindent
Similarly to the proof of Theorem \eqref{theo:convmono}, we make the following change of variables, for $i\in \{1,2\}$,
\begin{equation*}
 v_i(t,x)=g_i(t,x)\exp\left(\int_0^t\left(  \int_{\mathcal{X}}I_{i1}(y) g_1(s,y)dy+ \int_{\mathcal{X}} I_{i2}(y)g_2(s,y)dy \right)ds\right).
\end{equation*}
Following similar arguments as in subsection \ref{ssec:convmono}, we find a similar limit as \eqref{limit:KAM} for $g_i$, which leads to
\begin{equation}
\label{limit:gi/intgi}
\dfrac{g_1(t,.)}{\int_{\mathcal{X}}g_1(t,y)dy} \overset{L^{\infty}}{\underset{t \rightarrow + \infty}{\longrightarrow}} {A_1^1} \quad \text{ and } \quad \dfrac{g_2(t,.)}{\int_{\mathcal{X}}g_2(t,y)dy} \overset{L^{\infty}}{\underset{t \rightarrow + \infty}{\longrightarrow}} {A_1^2}. 
\end{equation}
Let $\rho_i(t)=\int_{\mathcal{X}}g_i(t,y)dy$ for $i\in \{1,2\}$. It remains now to understand the behavior of $(\rho_1(t),\rho_2(t))$. We deduce the following limits from \eqref{limit:gi/intgi},
\begin{equation*}
 \int_{\mathcal{X}}\dfrac{a_i(y)g_i(t,y)}{\rho_i(t)}dy \underset{t \rightarrow + \infty}{\longrightarrow} H_i \; \text{ and } \; \int_{\mathcal{X}}\dfrac{I_{ji}(y)g_i(t,y)}{\rho_i(t)}dy \underset{t \rightarrow + \infty}{\longrightarrow} \mu_{ji}.
\end{equation*}
Integrating \eqref{eq:systeme} on $\mathcal{X}$ and using the previous limits, we find that $(\rho_1, \rho_2)$ is a solution to
\begin{equation}
\label{eq:rho1rho2}
 \left\{
\begin{aligned}
 \frac{d}{dt} \rho_1(t)= \rho_1(t) \left( H_1+\mathcal{D}_1(t) -\mu_{11} \rho_1(t)-\mu_{12}\rho_2(t) \right),\\
\frac{d}{dt} \rho_2(t)= \rho_2(t) \left( H_2+\mathcal{D}_2(t) -\mu_{21} \rho_1(t)-\mu_{22}\rho_2(t) \right),
\end{aligned}
\right.
\end{equation}
with
\begin{equation*}
   \mathcal{D}_i(t)= \int_{\mathcal{X}}a_i(y)\dfrac{g_i(t,y)}{\rho_i(t)}dy-H_i +\sum_{j=1,2}\left(  \int_{\mathcal{X}}I_{ij}(y)\dfrac{g_j(t,y)}{\rho_j(t)}dy-\mu_{ij}  \right)\rho_j(t) \underset{t \rightarrow +\infty}{\longrightarrow} 0,\quad \text{for } i\in \{1,2\}.
\end{equation*}
Here we have used the fact that, in view of \fer{as:ai} and \fer{hyp:ci}, $\rho_j$ is a positive solution to 
$$
\partial_t\rho_j(t) \leq (a_{\infty}-I_{-}\rho_j(t))\rho_j(t)
$$ 
and hence $\rho_j$ is bounded, for $j\in \{1,2\}$.\\

\noindent
To go further we need the following lemma which is proven below. 

\begin{lemma}
\label{lemme:rho1rho2}
 Let $(\rho_1(t), \rho_2(t))$ be a positive solution to
\begin{equation*}
 \left\{
\begin{aligned}
 \frac{d}{dt} \rho_1(t)= \rho_1(t) \left( H_1+\mathcal{E}_1(t) -\mu_{11} \rho_1(t)-\mu_{12}\rho_2(t) \right),\\
\frac{d}{dt} \rho_2(t)= \rho_2(t) \left( H_2+\mathcal{E}_2(t) -\mu_{21} \rho_1(t)-\mu_{22}\rho_2(t) \right),
\end{aligned}
\right.
\end{equation*}
where $\mathcal{E}_i(t) \underset{t \rightarrow +\infty}{\longrightarrow}0$ for $i\in \{1,2\}$.
\begin{itemize}
 \item If $H_1\leq 0$ and $H_2 \leq 0$, then $ (\rho_1(t), \rho_2(t)) \underset{t \rightarrow +\infty}{\longrightarrow} \left(0, 0\right)$.
\end{itemize}
Also if at least one of the two eigenvalues is positive and
\begin{itemize}
 \item if $H_2 \mu_{11}-H_1 \mu_{21}\leq 0$ and $H_1 \mu_{22}-H_2 \mu_{12}>0$, then $ (\rho_1(t), \rho_2(t)) \underset{t \rightarrow +\infty}{\longrightarrow} \left(\frac{H_1}{\mu_{11}}, 0\right)$,
 \item if $H_2 \mu_{11}-H_1 \mu_{21}> 0$ and $H_1 \mu_{22}-H_2 \mu_{12}\leq 0$, then
 $(\rho_1(t), \rho_2(t)) \underset{t \rightarrow +\infty}{\longrightarrow} \left(0, \frac{H_2}{\mu_{22}}\right)$,
\item if $H_2 \mu_{11}-H_1 \mu_{21}> 0$ and $H_1 \mu_{22}-H_2 \mu_{12}>0$ then
$(\rho_1(t), \rho_2(t)) \underset{t \rightarrow +\infty}{\longrightarrow} (r_1, r_2)$,
where $r_1$ and $r_2$ are given by \fer{H1H2},
\item if $H_2 \mu_{11}-H_1 \mu_{21}< 0$ and $H_1 \mu_{22}-H_2 \mu_{12}=0$, or if $H_2 \mu_{11}-H_1 \mu_{21}= 0$ and $H_1 \mu_{22}-H_2 \mu_{12}< 0$, then $(\rho_1(t), \rho_2(t))$ has a limit which can be
 $\left( \frac{H_1}{\mu_{11}}, 0 \right) \text{ or } \left( 0, \frac{H_2}{\mu_{22}} \right)$, depending on  the initial condition, on the parameters and on the functions $(\mathcal{E}_i)_{i=1,2}$,
\item finally, if $H_2 \mu_{11}-H_1 \mu_{21}< 0$ and $H_1 \mu_{22}-H_2 \mu_{12}< 0$, then $(\rho_1(t), \rho_2(t))$ has a limit which can be one of the three non-zero limits, depending on  the initial condition, on the parameters and on the functions $(\mathcal{E}_i)_{i=1,2}$.
\end{itemize}
\end{lemma}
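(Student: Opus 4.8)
The plan is to analyze the perturbed Lotka-Volterra planar system by exploiting the fact that the perturbations $\mathcal{E}_i(t)$ vanish as $t\to\infty$, so that the long-time behavior is governed by the autonomous limiting system obtained by setting $\mathcal{E}_i\equiv 0$. The strategy is to reduce the asymptotic analysis of the nonautonomous system to the phase-plane analysis of the classical competitive Lotka-Volterra system, and then justify the reduction by an asymptotic-autonomy argument (of the type already used in Lemma \ref{lemme:rho}).

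First I would treat the case $H_1\leq 0$ and $H_2\leq 0$ directly: since $\mathcal{E}_i(t)\to 0$, for any $\varepsilon>0$ there is $t_0$ with $\frac{d}{dt}\rho_1 \leq \rho_1(H_1+\varepsilon-\mu_{11}\rho_1)$ for $t\geq t_0$, and because the $\rho_j$ are bounded the coupling terms $-\mu_{12}\rho_2$ only help (when $\mu_{12}\geq 0$) or can be absorbed into the $\varepsilon$; a Gronwall/comparison argument as in Lemma \ref{lemme:rho} then forces $\rho_1\to 0$, and symmetrically $\rho_2\to 0$. For the cases where the outcome is a unique global attractor (the three middle bullets), the backbone is the standard competitive Lotka-Volterra classification: the sign conditions on $H_2\mu_{11}-H_1\mu_{21}$ and $H_1\mu_{22}-H_2\mu_{12}$ are exactly the conditions placing the nullcline intersections so that one of the boundary equilibria $(\frac{H_1}{\mu_{11}},0)$, $(0,\frac{H_2}{\mu_{22}})$, or the interior point $(r_1,r_2)$ is globally asymptotically stable on the open positive quadrant. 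I would establish convergence for the limiting autonomous system first (building a trapping region between appropriately shifted nullclines, or invoking that a planar competitive system with no interior equilibrium or with a stable node has the relevant global attractor), then upgrade to the perturbed system: given $\varepsilon$, choose $t_0$ so $|\mathcal{E}_i|<\varepsilon$, and squeeze $(\rho_1,\rho_2)$ between the trajectories of the two autonomous systems with $H_i\pm\varepsilon$, whose attractors converge to the desired limit as $\varepsilon\to 0$.

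The delicate cases are the last two bullets (the bistable regime $H_2\mu_{11}-H_1\mu_{21}<0$, $H_1\mu_{22}-H_2\mu_{12}<0$, and the borderline cases with an equality), where I only need to prove that a limit exists and is one of the listed equilibria, not to identify which one. Here the limiting system has two stable boundary equilibria separated by the stable manifold of an interior saddle, so there is no uniform global attractor and the squeezing argument fails. For these I would argue that $(\rho_1,\rho_2)$ remains bounded and bounded away from the axes is impossible in the limit (the only $\omega$-limit candidates, by the asymptotic-autonomy theory of Markus/Thieme for asymptotically autonomous planar systems, are equilibria or the saddle's connecting orbits), and then rule out convergence to the unstable interior saddle using that $\mathcal{E}_i\to 0$ generically pushes trajectories off the measure-zero stable manifold. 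I would phrase the conclusion carefully as "the limit is one of $(\frac{H_1}{\mu_{11}},0)$ or $(0,\frac{H_2}{\mu_{22}})$, depending on the data," matching the statement.

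The main obstacle I anticipate is the rigorous transfer from the autonomous phase-plane picture to the nonautonomous system in precisely the bistable and borderline cases: the comparison/squeezing technique that works cleanly when there is a single global attractor does not control which basin a perturbed trajectory ultimately enters, so one must instead invoke the theory of asymptotically autonomous semiflows (chain-transitivity of $\omega$-limit sets and the fact that they are internally chain-transitive for the limiting flow) to conclude that the only admissible limit sets are single equilibria. Making this step self-contained, rather than citing the asymptotic-autonomy machinery, would be the most technical part of the argument.
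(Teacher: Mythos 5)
Your treatment of the first cases is sound and close in spirit to the paper: for $H_1,H_2\leq 0$ the paper also uses the logistic comparison (its Lemma \ref{lemme:rho}), and for the three globally attracting regimes the paper does essentially what you propose, except that instead of squeezing between autonomous systems it works directly with trapping regions bounded by $\varepsilon$-inflated nullclines ($D_1,\dots,D_5$) and nested invariant sets $D_{x_n}$, which keeps the whole argument elementary and self-contained. Note that the step you defer to Markus--Thieme asymptotic-autonomy theory (and flag as the most technical part) is exactly what the paper replaces by these hands-on nested-region constructions, also in the borderline and bistable cases; so even where your route could be made to work, it trades an elementary argument for heavier machinery.

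There is, however, a genuine error in your plan for the bistable case $H_2\mu_{11}-H_1\mu_{21}<0$, $H_1\mu_{22}-H_2\mu_{12}<0$. You propose to \emph{rule out} convergence to the interior saddle $(r_1,r_2)$ on the grounds that $\mathcal{E}_i\to 0$ ``generically pushes trajectories off the measure-zero stable manifold,'' and to conclude that the limit is one of the two boundary states only. This contradicts the statement you are proving: the last bullet asserts the limit can be \emph{any of the three} non-zero steady states, and correspondingly Theorem \ref{theo:special}(\ref{item:special1}) states that, although $(\hat g_1,\hat g_2)$ is unstable, some solutions do converge to it. The exclusion is in fact false: $\mathcal{E}_i\equiv 0$ is an admissible perturbation, and then any initial datum on the stable manifold of the saddle yields a positive solution converging to $(r_1,r_2)$. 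More fundamentally, a genericity argument can never establish a claim quantified over \emph{every} positive solution; the correct task in this regime is not to discard the saddle but to show that the $\omega$-limit set is a single equilibrium among the three (ruling out oscillation between basins and limit sets containing connecting orbits), which is what the paper's monotone trapping-set dichotomy accomplishes. As written, your proof would establish a statement strictly stronger than, and incompatible with, the lemma.
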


\noindent
This lemma and expressions \eqref{limit:gi/intgi} are sufficient to prove all the statements  of the theorem:\\

\noindent
(\ref{item:convergence}) One can verify using \fer{as:det} that all possible values of $(H_1,H_2)\in \R^2$ are covered by Lemma \ref{lemme:rho1rho2} and hence, in all cases,  the solution to \eqref{eq:rho1rho2} has a limit when $t$ tends to $+\infty$ for any initial condition.\\

\noindent 
(\ref{item:00}) If $H_1 \leq 0$ and $H_2 \leq 0$, $(\rho_1(t), \rho_2(t))$ tends to $(0,0)$, so for any initial condition, $(g_1(t,.),g_2(t,.))$ tends to $(0,0)$.\\

\noindent
(\ref{item:g10gas}) If $H_1 > 0 , \;  H_2\mu_{11}-H_1\mu_{21} \leq 0 \text{ and } H_1\mu_{22}-H_2\mu_{12} >0$, $(\rho_1(t), \rho_2(t))$ tends to $(\frac{H_1}{\mu_{11}},0)$ and $\int_{\mathcal{X}}\bar{g}_1(x)dx=\frac{H_1}{\mu_{11}}$. Therefore, for any initial condition, $(g_1(t,.),g_2(t,.))$ tends to $(\bar{g}_1,0)$.\\

\noindent
(\ref{item:0g2}) If $H_2 > 0 , \;  H_2\mu_{11}-H_1\mu_{21} > 0 \text{ and } H_1\mu_{22}-H_2\mu_{12} \leq0$,  $(\rho_1(t), \rho_2(t))$ tends to $(0,\frac{H_2}{\mu_{22}})$ and $\int_{\mathcal{X}}\bar{g}_2(x)dx=\frac{H_2}{\mu_{22}}$. Therefore, for any initial condition, $(g_1(t,.),g_2(t,.))$ tends to $(0, \bar{g}_2)$.\\

\noindent
(\ref{item:coexistence}) If $H_1 >0, \; H_2 >0, \; H_2\mu_{11}-H_1\mu_{21} > 0 \text{ and } H_1\mu_{22}-H_2\mu_{12} >0$, $(\rho_1(t), \rho_2(t))$ tends to $(r_1, r_2)$  and we have, from Lemma \ref{lemme:etatstationnaire}, that $r_1=\int_{\mathcal{X}}\hat{g}_1(x)dx$ and $r_2=\int_{\mathcal{X}}\hat{g}_2(x)dx$. It follows that $(g_1(t,.),g_2(t,.))$ tends to $(\hat{g}_1, \hat{g}_2)$ for any initial condition.\\


\begin{proof}[Proof of Lemma \ref{lemme:rho1rho2}]
We split the proof into several cases depending on the values of $H_1$ and $H_2$.\\

\noindent
\textbf{Case 1}: First of all, we will consider that at least  one of the two variables is non-positive. For example, let assume that $H_1 \leq 0$.\\
Let $\varepsilon>0$ and $t_{\varepsilon}>0$ such that for all $t\geq t_{\varepsilon}$, $|\mathcal{E}_1(t)|\leq \mu_{11} \varepsilon$. So for all $t \geq t_{\varepsilon}$, $\partial_t \rho_1(t) \leq \rho_1(t) (\mu_{11}\varepsilon-\mu_{11} \rho_1(t))$. Thanks to the results on the logistic equation, we conclude easily that $ \underset{t \rightarrow +\infty}{\limsup} \rho_1(t) \in [0, \varepsilon]$. As this is true for all $\varepsilon>0$, $\rho_1(t)$ tends toward $0$ when $t$ approaches infinity. Therefore, $\rho_2$ solves
\begin{equation*}
\partial_t \rho_2(t)= \rho_2(t) (H_2+\mathcal{E}'(t)-\mu_{22}\rho_2(t)) \text{ where } \mathcal{E}'(t)=\mathcal{E}_2(t)-\mu_{21}\rho_1(t) \underset{t \rightarrow +\infty}{\longrightarrow} 0.
\end{equation*}
We conclude that $\rho_2$ convergence and evaluate its limit thanks to Lemma \ref{lemme:rho}.\\

\bigskip\noindent
We consider now that $H_1$ and $H_2$ are positive. We will detail only three cases here, the others can be adapted from those three cases.\\
\textbf{Case 2}: Let $H_2 \mu_{11}-H_1 \mu_{21}<0$ and $H_1 \mu_{22}-H_2 \mu_{12}>0$; the case where $H_2 \mu_{11}-H_1\mu_{21}>0$ and $H_1\mu_{22}-H_2\mu_{12}<0$, can be studied following similar arguments.

\bigskip \noindent
Let $\varepsilon>0$ be small enough to satisfy
\begin{equation}
\label{choice:eps}
 \min\left\{ \frac{H_1-\varepsilon}{\mu_{12}}-\frac{H_2+\varepsilon}{\mu_{22}}, \frac{H_1-\varepsilon}{\mu_{11}}-\frac{H_2+\varepsilon}{\mu_{21}}, \frac{H_2-\varepsilon}{\mu_{22}},\frac{H_2-\varepsilon}{\mu_{21}} \right\}>0.
\end{equation}
We split $(\mathbb{R}_+)^2$ into five disjoint sets presented bellow and drawn on figure \ref{fig1}:
\begin{align*}
&D_1=\{ (\rho_1, \rho_2) \in (\mathbb{R}_+^*)^2, \, -H_2+\mu_{21}\rho_1+\mu_{22}\rho_2 \leq -\varepsilon \}\\
&D_2=\{ (\rho_1, \rho_2) \in (\mathbb{R}_+^*)^2,\, -H_1+\mu_{11}\rho_1+\mu_{12}\rho_2 \geq \varepsilon \}\\
&D_3=\{ (\rho_1, \rho_2) \in (\mathbb{R}_+^*)^2,\, -H_2+\mu_{21}\rho_1+\mu_{22}\rho_2 \geq \varepsilon, \; -H_1+\mu_{11}\rho_1+\mu_{12}\rho_2 \leq -\varepsilon \}\\
&D_4=\{ (\rho_1, \rho_2) \in (\mathbb{R}_+^*)^2,\, -H_2+\mu_{21}\rho_1+\mu_{22}\rho_2 \geq -\varepsilon, \; -H_2+\mu_{21}\rho_1+\mu_{22}\rho_2 \leq \varepsilon \}\\
&D_5=\{ (\rho_1, \rho_2) \in (\mathbb{R}_+^*)^2,\, -H_1+\mu_{11}\rho_1+\mu_{12}\rho_2 \geq -\varepsilon, \; -H_1+\mu_{11}\rho_1+\mu_{12}\rho_2 \leq \varepsilon \}.
\end{align*}
\begin{figure}[!ht]
\begin{center}
\includegraphics[scale=0.4]{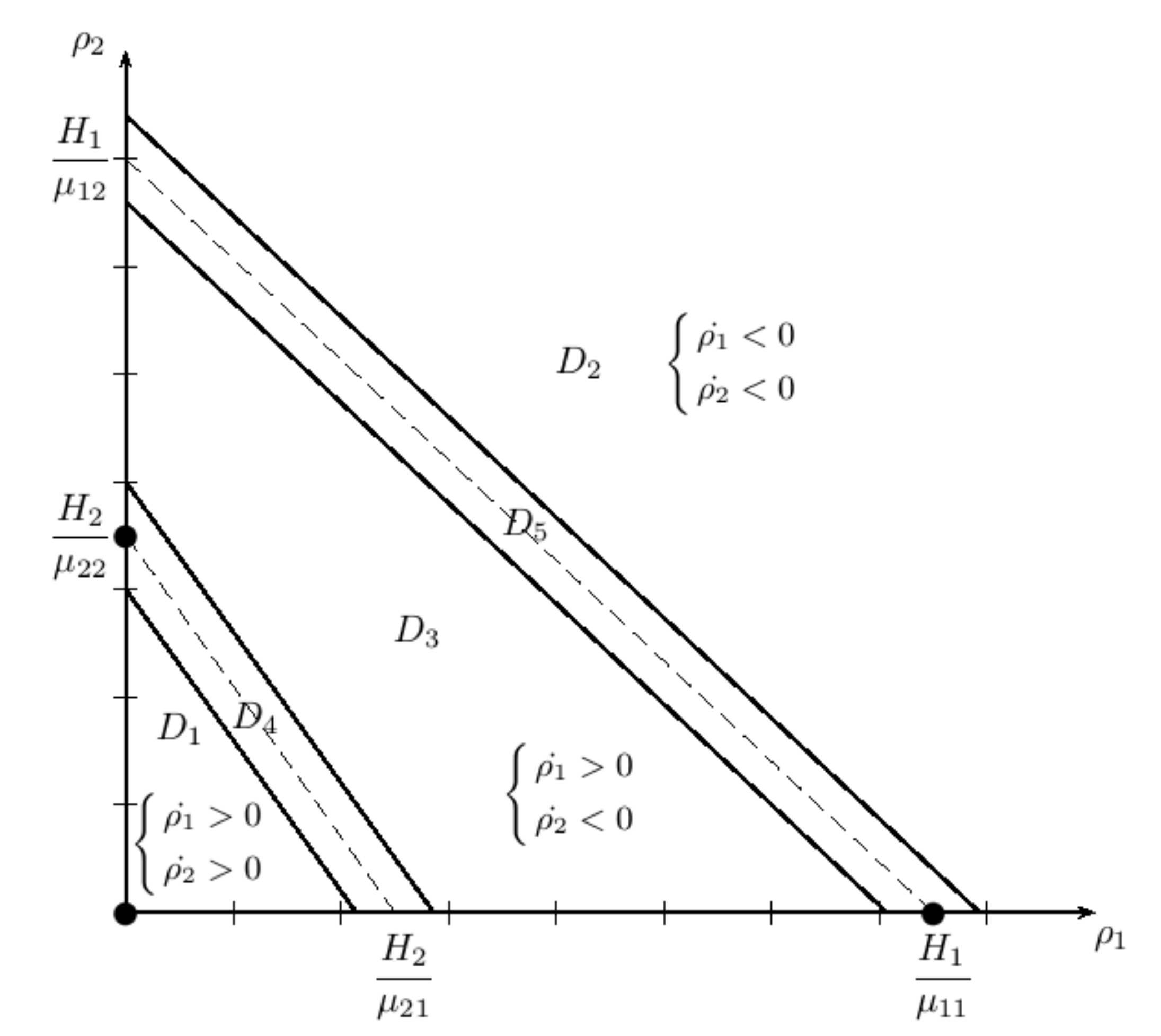}
\caption{\label{fig1} Plan arrangement for case 2, i.e. $H_2 \mu_{11}-H_1 \mu_{21}<0$ and $H_1 \mu_{22}-H_2 \mu_{12}>0$}
\end{center}
\end{figure}
\noindent 
There exists $t_{\varepsilon}>0$ such that for all $t\geq t_{\varepsilon}$, $\max\{|\mathcal{E}_1(t)|, |\mathcal{E}_2(t)|\} \leq \frac{\varepsilon}{2}$. It is then easy to verify that, for $i=1,2$, $\f {d}{dt} \rho_i \geq \f {\varepsilon} {2}\rho_i$ in $D_1$ and $\f {d}{dt}\rho_i\leq -\f {\varepsilon} {2}\rho_i$ in $D_2$. Moreover, $\f {d}{dt} \rho_1 \geq \f {\varepsilon} {2}\rho_1$ and $\f {d}{dt} \rho_2 \leq -\f {\varepsilon} {2}\rho_2$ in $D_3$.\\

\noindent 
As $\rho_1$ satisfies $\f {d}{dt} \rho_1(t) \geq \frac{\varepsilon}{2} \rho_1(t)$ in $D_1$, for all $t \geq t_{\varepsilon}$, if $(\rho_1(\bar t), \rho_2(\bar t))$ belongs to $D_1$ for some $\bar t\geq t_{\varepsilon}$, it will quit this domain after a finite time $t_0$ and reach the set $D'=D_3 \cup D_4 \cup D_5$. Same kind of results holds in $D_2$. Thus after a finite time $t_0 \geq t_{\varepsilon}$, the trajectory of the solution reaches $D'$, moreover it cannot quit this domain according to the signs of derivatives of $\rho_1$ and $\rho_2$ at the boundaries of $D'$.\\
The next step is to study the dynamic in $D'$. Suppose that the trajectory belongs to $D_3 \cup D_4$, it cannot stay in that area for all $t\geq t_0$, so there exists $t_1$ such that $(\rho_1(t_1), \rho_2(t_1)) \in D_5$. We denote the entry point in $D_5$ by $x_1$, drawn in figure \ref{fig2}. According to the derivatives of ${\rho_1}$ and ${\rho_2}$, the trajectory of the solution does not quit the set:
\begin{equation*}
 D_{x_1}=\left\{ (\rho_1,\rho_2)\in (\mathbb{R}_+^*)^2, \rho_1 \geq \rho_1^{x_1}, \rho_2 \leq \rho_2^{x_1} \right\} \cap D',
\end{equation*}
where $(\rho_1^{x_1}, \rho_2^{x_1})$ are the coordinates of $x_1$, this set is represented by the hatched area on the left scheme of figure \ref{fig2}.
\noindent
Moreover, as long as the trajectory stays in $D_3 \cup D_5$, $\rho_2$ satisfies $\partial_t \rho_2(t) \leq -\frac{\varepsilon}{2} \rho_2(t)$. So two cases can happen:
\begin{itemize}
\item[(a)] either $\rho_2(t) \underset{t \rightarrow +\infty}{\longrightarrow}0 $, and thus $\rho_1$ tends to $\frac{H_1}{\mu_{11}}$ from Lemma \ref{lemme:rho},
\item[(b)] or there exists $t_2>t_1$ where the trajectory reaches $D_4$. Let denote $x_2=(\rho_1^{x_1}, \frac{H_2+\varepsilon-\mu_{21}\rho^{x_1}_1}{\mu_{22}})$. As the trajectory stays in $D_{x_1}$, it reaches $D_4\cap D_{x_2}$, where $D_{x_2}=\{ (\rho_1,\rho_2)\in (\mathbb{R}_+^*)^2, \rho_1\geq\rho_1^{x_2}, \rho_2\leq\rho_2^{x_2} \} \cap D'$ . Moreover, for all $t\geq t_2 $, the trajectory stays in $D_{x_2}$ (see the hatched area on the right scheme of figure \ref{fig2}).\\
Iterating the previous step, we construct a decreasing sequence of areas $(D_{x_n})_{n\geq 0}$ which will be necesseraly finite. Indeed, the choice of $\varepsilon$ \eqref{choice:eps} implies that there exists $m \in \mathbb{N}$ such that $D_{x_{2m}} \cap D_4=\varnothing$. Then we conclude as in the case (a) above.
\end{itemize}

\begin{figure}[!ht]
\begin{center}
\includegraphics[scale=0.25]{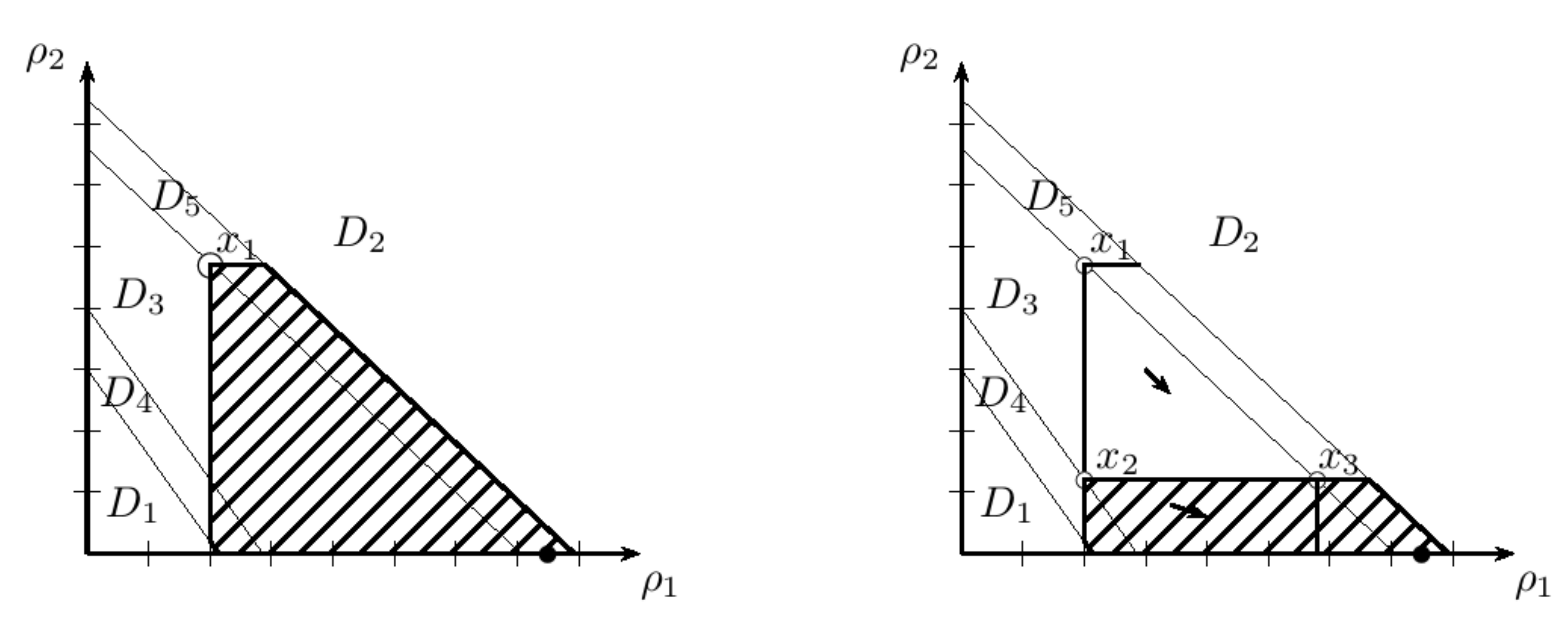}
\caption{\label{fig2} Dynamic for the case 2: $H_2 \mu_{11}-H_1 \mu_{21}<0$ and $H_1 \mu_{22}-H_2 \mu_{12}>0$}
\end{center}
\end{figure}

\bigskip
\noindent
The next case is quite similar except for the end of the proof.\\\
\textbf{Case 3}: Let $H_2 \mu_{11}-H_1 \mu_{21}=0$ and $H_1 \mu_{22}-H_2 \mu_{12}>0$;  
 the case where $H_2 \mu_{11}-H_1\mu_{21}>0$ and $H_1\mu_{22}-H_2\mu_{12}=0$ and the one where $H_2 \mu_{11}-H_1\mu_{21}>0$ and $H_1\mu_{22}-H_2\mu_{12}>0$ can be proven using same kind of arguments.

\bigskip \noindent Let $k\geq 1$ and $\varepsilon>0$ be such that
\begin{equation*}
 \max \left\{\frac{\mu_{11}}{\mu_{21}},\frac{\mu_{21}}{\mu_{11}} \right\} <k \quad \text{ and } \quad \min\left\{ \frac{H_1-\varepsilon}{\mu_{21}}-\frac{H_2+\varepsilon}{\mu_{22}}, \frac{H_2-k\varepsilon}{\mu_{22}},\frac{H_2-k\varepsilon}{\mu_{21}} \right\}>0.
\end{equation*}
We divide the plan $(\mathbb{R}_+)^2$ as presented in figure \ref{cas1'}, where $D_1$, $D_2$, $D_3$, $D_4$, $D_5$ and $D'$ are defined as follows
\begin{align*}
&D_1=\{ (\rho_1, \rho_2) \in (\mathbb{R}_+^*)^2, \, -H_2+\mu_{21}\rho_1+\mu_{22}\rho_2 \leq -k\varepsilon \}\\
&D_2=\{ (\rho_1, \rho_2) \in (\mathbb{R}_+^*)^2,\, -H_1+\mu_{11}\rho_1+\mu_{12}\rho_2 \geq k\varepsilon \}\\
&D_3=\{ (\rho_1, \rho_2) \in (\mathbb{R}_+^*)^2,\, -H_2+\mu_{21}\rho_1+\mu_{22}\rho_2 \geq \varepsilon, \; -H_1+\mu_{11}\rho_1+\mu_{12}\rho_2 \leq -\varepsilon \}\\
&D_4=\{ (\rho_1, \rho_2) \in (\mathbb{R}_+^*)^2,\, -H_2+\mu_{21}\rho_1+\mu_{22}\rho_2 \geq -k\varepsilon, \; -H_2+\mu_{21}\rho_1+\mu_{22}\rho_2 \leq \varepsilon \}\\
&D_5=\{ (\rho_1, \rho_2) \in (\mathbb{R}_+^*)^2,\, -H_1+\mu_{11}\rho_1+\mu_{12}\rho_2 \geq -\varepsilon, \; -H_1+\mu_{11}\rho_1+\mu_{12}\rho_2 \leq k\varepsilon \}\\
& D'=D_3 \cup D_4 \cup D_5.
\end{align*}
\begin{figure}[!ht]
\begin{center}
\includegraphics[scale=0.4]{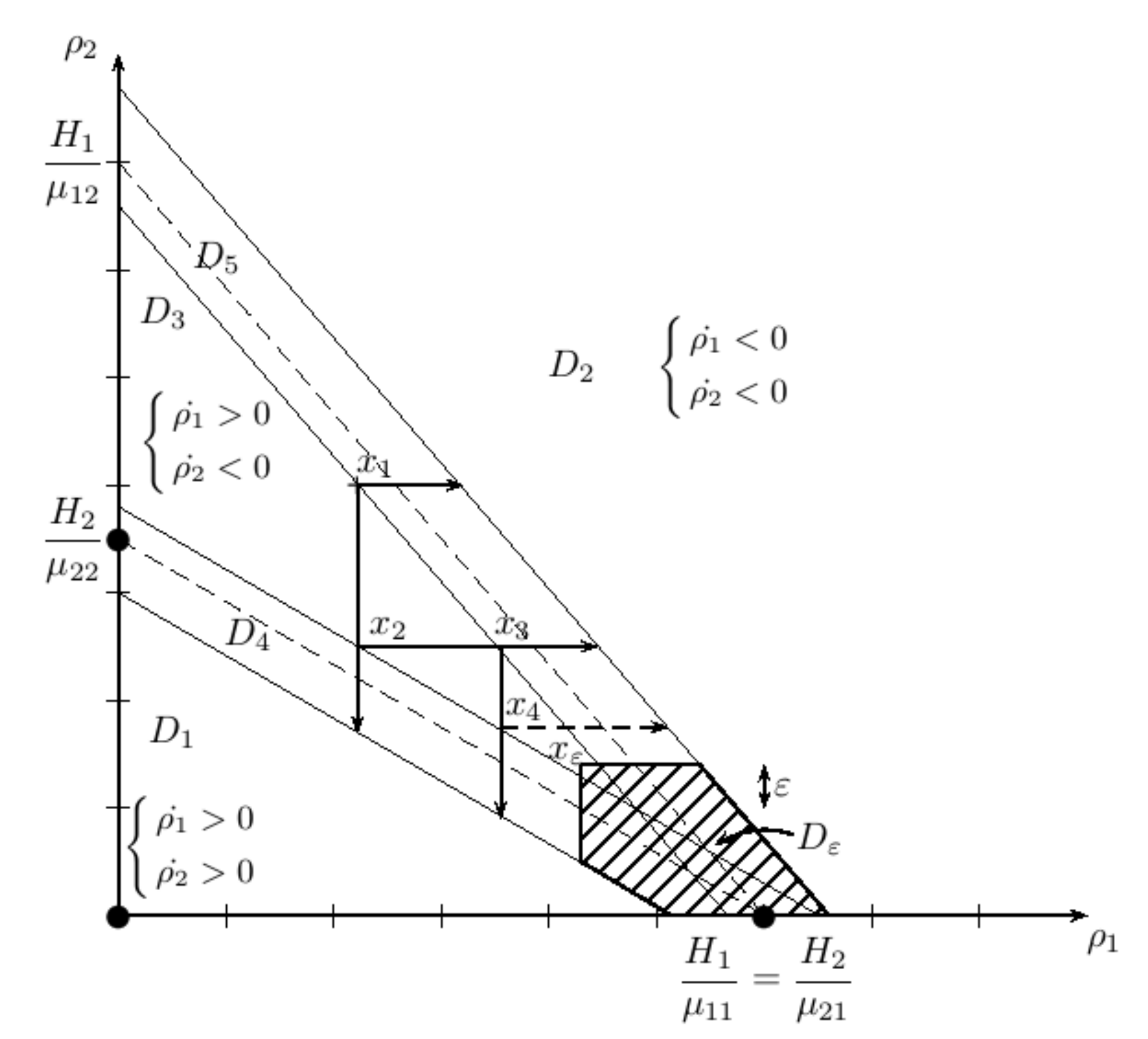}
\caption{\label{cas1'} Arrangement for the case 3, i.e. $H_2 \mu_{11}-H_1 \mu_{21}=0$ et $H_1 \mu_{22}-H_2 \mu_{12}>0$}
\end{center}
\end{figure}

\noindent
The constant $k$ is chosen such that $D_1\cap D_5=\varnothing$ and $D_2 \cap D_4=\varnothing$ and $t_{\varepsilon}$ is defined as before. There exists $t_0\geq t_{\varepsilon}$ such that for all $t\geq t_0$, the trajectory is belonging to $D'$. Then we construct a sequence of sets $(D_{x_n})_{n\geq 1}$ as before, but this time, this sequence can be infinite. So let $D_{\varepsilon}$ be the set
\begin{multline*}
 D_{\varepsilon}=D' \cap \bigg\{ (\rho_1, \rho_2)\in (\mathbb{R}^+)^2,  \rho_2\leq \varepsilon \left( \dfrac{\mu_{11}+\mu_{21}}{\mu_{11}\mu_{22}-\mu_{12}\mu_{21}} +1\right)\\
 \text{ and } \rho_1 \geq \dfrac{H_1}{\mu_{11}} -\varepsilon\left( \dfrac{\mu_{22}+\mu_{12}}{\mu_{11}\mu_{22}-\mu_{12}\mu_{21}} +1 \right)\bigg\}.
\end{multline*}
There exists $n$ such that $D_{x_n}$ is included in $D_{\varepsilon}$, i.e. the trajectory is belonging to $D_{\varepsilon}$ after a finite time. As this is true for all $\varepsilon>0$, $(\rho_1(t), \rho_2(t)) \underset{t \rightarrow +\infty}{\longrightarrow} \left(\frac{H_1}{\mu_{11}}, 0\right)$.\\

\bigskip \noindent
The last case that we detail is a case where several limits are possible.\\
\textbf{Case 4}: Let $H_2 \mu_{11}-H_1 \mu_{21}<0$ and $H_1 \mu_{22}-H_2 \mu_{12}=0$; 
we can deal with the case $H_2 \mu_{11}-H_1\mu_{21}=0$ and $H_1\mu_{22}-H_2\mu_{12}<0$ and the one with $H_2 \mu_{11}-H_1\mu_{21}<0$ and $H_1\mu_{22}-H_2\mu_{12}<0$ thanks to similar arguments.

 \medskip \noindent Let $k\geq 1$ and $\varepsilon>0$ such that
$$\max \left\{\frac{\mu_{22}}{\mu_{12}},\frac{\mu_{12}}{\mu_{22}} \right\} <k \quad \text{ and } \quad \min\left\{ \frac{H_1-\varepsilon}{\mu_{11}}-\frac{H_2+\varepsilon}{\mu_{21}}, \frac{H_2-k\varepsilon}{\mu_{22}},\frac{H_2-k\varepsilon}{\mu_{21}} \right\}>0.$$
We divide the plan $(\mathbb{R}_+)^2$ as presented in figure \ref{cas1''}, where $D_3$, $D_4$, $D_5$, $D'$ are defined as in the case 3 and $D_{\varepsilon}$ is defined as follows
\begin{align*}
 D_{\varepsilon}=D' \setminus \bigg\{ (\rho_1,\rho_2)\in (\mathbb{R}^+)^2, \rho_1> \varepsilon  \dfrac{\mu_{21}+\mu_{22}}{\mu_{12}\mu_{21}-\mu_{11}\mu_{22}} \, \text{ and } \, \rho_2< \dfrac{H_2}{\mu_{22}}-\varepsilon \dfrac{\mu_{12}+\mu_{11}}{\mu_{12}\mu_{21}-\mu_{11}\mu_{22}} \bigg\}
 \end{align*}

\begin{figure}[!ht]
\begin{center}

\includegraphics[scale=0.4]{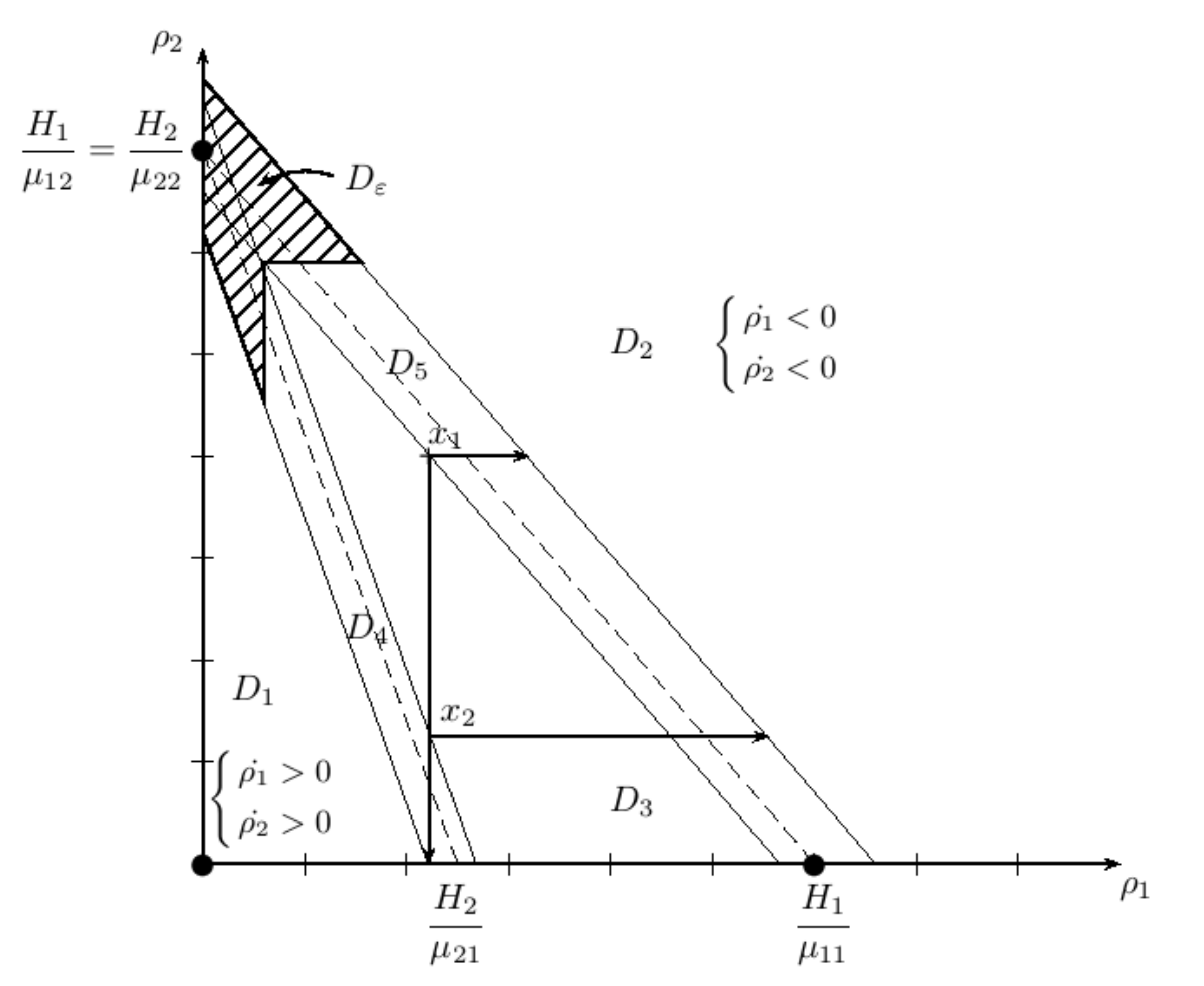}
\caption{\label{cas1''} Arrangement for the case 4, i.e. $H_2 \mu_{11}-H_1 \mu_{21}<0$ et $H_1 \mu_{22}-H_2 \mu_{12}=0$}
\end{center}
\end{figure}
\noindent
As before, we find $t_0\geq t_{\varepsilon}$ such that for all $t\geq t_0$, the trajectory of the solution belongs to $D'$. Then there exist two possibilities.
\begin{itemize}
\item Either for all $\varepsilon>0$, there exists $\tau_{\varepsilon} > t_{\varepsilon}$ such that for all $t \geq \tau_{\varepsilon}$ the trajectory belongs to $D_{\varepsilon}$, that is, $ (\rho_1(t),\rho_2(t)) \underset{t \rightarrow +\infty}{\longrightarrow} \left( 0, \frac{H_2}{\mu_{22}} \right).$
\item Or there exists $\varepsilon>0$ and $\tau_{\varepsilon}>t_{\varepsilon}$ when the trajectory is belonging to $D' \setminus D_{\varepsilon}$. Using same kind of arguments as before, we obtain that the trajectory won't quit this set for all $t \geq \tau_{\varepsilon}$, and we construct a sequence of decreasing sets to conclude that $ (\rho_1(t), \rho_2(t)) \underset{t \rightarrow +\infty}{\longrightarrow} \left(\frac{H_1}{\mu_{11}}, 0\right)$.\\
\end{itemize}

\end{proof}

\bigskip

\subsection{The proof of Theorem \ref{theo:special}}
\noindent
In this section we prove Theorem \ref{theo:special}.\\

\noindent (\ref{item:special1}) Let us deal with the first case where $H_1 > 0, \; H_2 >0, \; H_2\mu_{11}-H_1\mu_{21} <0,  \text{ and } H_1\mu_{22}-H_2\mu_{12}<0$. Thanks to the last statement of Lemma \ref{lemme:rho1rho2}, we already know that any solution tends towards one of the non-trivial steady states. We precise the stability of each state.\\

(\ref{item:special1}a) This point is devoted to show the asymptotic stability of $(\bar{g}_1,0)$ if $H_1>0$ and $H_2 \mu_{11}-H_1 \mu_{21}<0$. Using symmetric arguments, it then can be shown that if $H_2>0$ and $H_1\mu_{22}-H_2\mu_{12}<0$, $(0, \bar{g}_2)$ is stable.\\

\noindent
Precisely, we show that if the positive initial condition $(g_1(0,.),g_2(0,.))$ satisfies the following condition: there exist $C_1>0$ and $C_2>0$ such that
   \begin{eqnarray}
\label{hyp:g1}
    \max_{i=1,2} \{ \|g_1(0,.)-\bar{g}_1\|_{L^2} \|I_{i1}\|_{L^2} \} &\leq& {C_1},\\
\label{hyp:g2}
    \|g_2(0,.)\|_{L^2} {\|I_{12}\|_{L^2}} &\leq& {C_2},
   \end{eqnarray}
   where the above constants satisfy the following compatibility conditions
   \beq
    \label{hyp1C}
    {C} =2 \, \left((C_1+C_2) \cdot \max \left\{ 1, \dfrac{\mu_{21}}{\mu_{11}} \right\} +  C_1 \right) <\min \left\{ H_1-\lambda_2^1, \dfrac{\mu_{21}}{\mu_{11}} H_1 -H_2 \right\},
   \eeq
   then the solution to the equation \eqref{eq:systeme} tends to the steady state $(\bar{g}_1,0)$.\\
   
\noindent   
Let us express $g_1$ in the basis $(A_k^1, k \in \mathbb{N}^*)$, $g_1(t,x)=\int_{\mathcal{X}}\bar{g}_1(x)dx A_1^1(x) + \sum_{k=1}^{\infty} \alpha_k(t) A_k^1(x)$, $\forall x\in \mathcal{X}$, and denote $\kappa(t)=\int_{\mathcal{X}}\bar{g}_1(x)dx+ \alpha_1(t)$, for all $t\in \mathbb{R}^+$.\\

\noindent
From \eqref{eq:systeme} and the representation of $g_1$ and $\partial_t g_1$ with respect to the basis $(A_k^1, k \in \mathbb{N}^*)$, we find the following dynamical system
\begin{equation}
\label{eq:alpharho}
   \left\{
    \begin{aligned} 
       &\dfrac{d}{dt}\alpha_k(t)= \alpha_k(t) \left(\lambda_k^1-H_1-  \sum_{\ell=1}^{\infty} \alpha_{\ell}(t) \int_{\mathcal{X}}({I_{11}}A_{\ell}^1) -\int_{\mathcal{X}}({I_{12}}g_2(t,.)) \right), \; \forall k \geq 2,\\
       &\dfrac{d}{dt}\kappa(t)=\kappa(t) \left( H_1-\sum_{\ell=2}^{\infty} \alpha_{\ell}(t)\int_{\mathcal{X}}({I_{11}}A_{\ell}^1)  - \int_{\mathcal{X}}({I_{12}}g_2(t,.)) -\mu_{11} \kappa(t) \right).
    \end{aligned}
   \right.
\end{equation} 
Here,  we have used the fact that since, from Lemma \ref{spec}, $L_N=\int_{\mathcal{X}}\bar{g}_1dx A_1^1+\sum_{k=1}^N \al_k A_k^1$ tends to $g_1$ in $L^2$ as $N\to \infty$, and since the domain $\CX$ is bounded,  $L_N$ tends to $g_1$ in $L^1$.\\

\noindent We will show that for all $t\geq 0$, 
\begin{equation}
\label{eq:lowerbound}
\min_{i=1,2}  \left\{  \sum_{k=1}^{\infty} \alpha_{k}(t)\int_{\mathcal{X}}({I_{i1}(y)}A_{k}^1(y))dy \right\} \geq  -C.
\end{equation}
We notice that this is true for $t=0$ thanks to \eqref{hyp:g1}: for $i\in \{1,2\}$,
\begin{equation*}
 \begin{aligned}
  \left\vert \sum_{k=1}^{\infty} \alpha_{k}(0)\int_{\mathcal{X}}({I_{i1}}A_{k}^1) \right\vert & \leq \left(\sum_{k=1}^{\infty} \alpha_{k}(0)^2 \int_{\mathcal{X}} |A_{k}^1|^2 \right)^{1/2} \left(\sum_{k=1}^{\infty} \dfrac{  (\int_{\mathcal{X}} I_{i1} A_{k}^1)^2}{\int_{\mathcal{X}} |A_{k}^1|^2}\right)^{1/2}\\
 & \leq \|g_1(0,.)-\bar{g}_1\|_{L^2} \|I_{i1}\|_{L^2} \\
 &\leq C_1< C.
 \end{aligned}
\end{equation*}
The second line is justified using the representation of $I_{i1}$ with respect to the orthonormal basis $\{A_k^1/\|A_k^1\|_{L^2}\}_{k\geq 1}$ which is $\{(\int_{\mathcal{X}}I_{i1}A_k^1)/\|A_k^1\|_{L^2} \}_{k\geq 1}$.\\

\noindent
We prove the result \eqref{eq:lowerbound} by contradiction. Denote 
$$ t_0=\inf \left\{ t>0 \Big\vert \min_{i=1,2} \Big\{\sum_{k=1}^{\infty} \alpha_k(t)\int_{\mathcal{X}}({I_{i1}(y)}A_k^1(y))dy\Big\}\leq -C  \right\}
$$
and suppose that $t_0$ is finite. Thus, for $i\in \{1,2\}$,
\begin{equation}
\label{assumption}
 \forall t \leq t_0, \quad \sum_{k=1}^{\infty} \alpha_k(t)\int_{\mathcal{X}}({I_{i1}(y)}A_k^1(y))dy \geq -C.
\end{equation}
In fact, from \eqref{assumption}, we will find a lower bound greater than $-C$ for any $t\leq t_0$ which is a contradiction with the fact that $t_0$ is finite.\\
First, let us deal with the expression $\sum_{k=2}^{\infty} \alpha_k(t)\int_{\mathcal{X}}({I_{i1}}A_k^1)dx$. We multiply the first equation of \eqref{eq:alpharho} by $\alpha_k$. Then, using the positivity of $g_2$, the assumption \eqref{hyp1C} and Gronwall's lemma, we get for all $t \leq t_0$, $\alpha_k(t)^2  \leq \alpha_k(0)^2 e^{2(\lambda_k^1-H_1+C)t} \leq \alpha_k(0)^2$. Thus, for all $t \leq t_0$, for $i\in \{1,2\}$,
\begin{equation}
\label{eq:sumalpha}
\begin{aligned}
\left\vert \sum_{k=2}^{\infty} \alpha_k(t)\int_{\mathcal{X}}({I_{i1}}A_k^1) \right\vert & \leq    \left(\sum_{k=2}^{\infty} \alpha_k(t)^2 \int_{\mathcal{X}} |A_k^1|^2 \right)^{1/2} \left(\sum_{k=2}^{\infty} \dfrac{  (\int_{\mathcal{X}} I_{i1} A_k^1 )^2}{\int_{\mathcal{X}} |A_k^1|^2}\right)^{1/2}\\
& \leq  \left(\sum_{k=2}^{\infty} \alpha_k(0)^2 \int_{\mathcal{X}} |A_k^1|^2 \right)^{1/2} \left(\sum_{k=2}^{\infty} \dfrac{  (\int_{\mathcal{X}} I_{i1} A_k^1 )^2}{\int_{\mathcal{X}} |A_k^1|^2}\right)^{1/2}\\
& \leq  \|g_1(0,.)-\bar{g}_1\|_{L^2} \| I_{i1} \|_{L^2} <   C_1.
 \end{aligned}
\end{equation}
Then, in view of finding a lower bound to $\kappa$, we are concerned with $ \int_{\mathcal{X}}{I_{12}}g_2(t,.) dx$. We multiply the second equation in \eqref{eq:systeme} by $g_2$ and integrate it over $\mathcal{X}$:
\begin{equation}
 \frac{1}{2} \frac{d}{dt} \|g_2(t,.)\|_{L^2}^2 {\leq} \left(H_2-\mu_{21}\frac{H_1}{\mu_{11}}-\sum_{k=1}^{\infty} \alpha_k(t)\int_{\mathcal{X}}({I_{21}}A_k^1)-\int_{\mathcal{X}}I_{22}(y)g_2(t,y)dy \right) \cdot \|g_2(t,.)\|_{L^2}^2
\end{equation}
From the assumption \eqref{assumption} for $i=2$, the positivity of $g_2$ and the Gronwall's lemma, we get that for all $t \leq t_0$,
\begin{equation}
\label{eq:intg2}
     \|g_2(t,.)\|_{L^2}^2  \leq \exp \left\{2\left(H_2-\mu_{21}\dfrac{H_1}{\mu_{11}}+C \right)t \right\} \cdot \|g_2(0,.)\|_{L^2}^2 \leq \|g_2(0,.)\|_{L^2}^2.
\end{equation}
That is, with the assumption \eqref{hyp:g2}, for all $t \leq t_0$, $0 \leq \int_{\mathcal{X}}({I_{12}}g_2(t,.))  \leq \|I_{12}\|_{L^2} \|g_2(t,.)\|_{L^2} \leq C_2$.\\
 We use this inequality and \eqref{eq:sumalpha} to show that $\kappa$ satisfies, for all $t \leq t_0$,
 $$
 \kappa(t) \left( H_1-C_1 - C_2 -\mu_{11} \kappa(t) \right)\leq \partial_t\kappa(t).
 $$
Moreover, from \eqref{hyp:g1}, $  \vert \kappa(0)-\frac{H_1}{\mu_{11}} \vert \cdot \mu_{11} \leq  {C_1}$. Using classical results on logistic equation, we deduce the following lower bound
\begin{equation}
\label{eq:rho1}
  \dfrac{-C_1-C_2}{\mu_{11}}  \leq \kappa(t)-\dfrac{H_1}{\mu_{11}} =\alpha_1(t), \; \forall t \leq t_0.
\end{equation}
Finally, we conclude with inequalities \eqref{eq:sumalpha}, \eqref{eq:rho1}, assumption \eqref{hyp1C} and definitions of $\mu_{i1}$: for all $t \leq t_0$, for $i\in \{1,2\}$
\begin{equation*}
 \sum_{k=1}^{\infty} \alpha_k(t) \int_{\mathcal{X}}({I_{i1}}A_k^1) \geq \alpha_1(t) \mu_{i1} +\sum_{k=2}^{\infty} \alpha_k(t) \int_{\mathcal{X}}({I_{i1}}A_k^1) \geq -(C_1+C_2) \dfrac{\mu_{i1}}{\mu_{11}}-C_1 \geq   -\dfrac{C}{2}.
\end{equation*}
This is the contradiction that we wanted to reach, thus, $t_0=+\infty$.\\

\noindent
Moreover, Theorem \ref{theo:main} guarantees the existence of a limit for $(g_1(t,.),g_2(t,.))$. Let us identify that limit. On the one hand, we note that \eqref{eq:intg2} holds for all $t\geq 0$, since $t_0=+\infty$, and hence $ \int_{\mathcal{X}} |g_2(t,x)|^2dx $ tends to $0$, as $t$ approaches infinity.\\ 
On the other hand, thanks to the equation \eqref{eq:rho1}, which holds for all $t \geq 0$, and \eqref{hyp1C},
\begin{eqnarray*}
 \int_{\mathcal{X}} |g_1(t,x)|^2 dx  &\geq& \sum_{k=2}^{\infty} \alpha_k(t)^2 \int_{\mathcal{X}} |A_k^1|^2 + \kappa(t)^2 \int_{\mathcal{X}} |A_1^1|^2\\
 & \geq & 0+\left(\dfrac{H_1-C_1-C_2}{\mu_{11}}\right)^2 \int_{\mathcal{X}} |A_1^1|^2 > 0.
\end{eqnarray*}
Thus, the limit of $\|g_1(t,.)\|_{L^2}$ is positive. The limit of the solution is hence the steady state $(\bar{g}_1,0)$.\\

(\ref{item:special1}b) Here, we show that if $H_2 \mu_{11}-H_1 \mu_{21}<0$ and $H_1 \mu_{22}-H_2 \mu_{12}<0$, the steady state $(\hat{g}_1, \hat{g}_2)$ is unstable, precisely in any neighborhood of $(\hat{g}_1, \hat{g}_2)$, there exists a solution to \eqref{eq:systeme} that does not tend towards $(\hat{g}_1, \hat{g}_2)$, but there also exist some solutions that tend towards it.\\
We use a solution to \eqref{eq:systeme} with an initial condition which belongs to the subspace $vect (\hat{g}_1) \times vect(\hat{g}_2)$. Let us notice, using the form of the equation \eqref{eq:systeme} satisfied by $(g_1,g_2)$, that, if the initial condition belongs to a subspace $vect(A_k^1)_{k\in K} \times vect(A_{\ell}^2)_{\ell\in L}$ with $K$ and $L$ subsets of $\mathbb{N}$, then for all $t\geq 0$, the solution $(g_1(t,.), g_2(t,.))$ belongs to that subspace. Thus, for all $t\geq 0$, $g_1(t,x)=\alpha(t) \hat{g}_1(x)$ and $g_2(t,x)=\beta(t) \hat{g}_2(x)$. We get the following system
\begin{equation*}
 \left\{
	\begin{aligned}
	 &\dfrac{d}{dt} \alpha(t) = \alpha(t) \left( H_1- \mu_{11} \Big( \int_{\mathcal{X}} \hat{g}_1 \Big) \alpha(t) - \mu_{12}\Big( \int_{\mathcal{X}} \hat{g}_2 \Big)\beta(t) \right)\\
	 &\dfrac{d}{dt} \beta(t) = \beta(t) \left( H_2- \mu_{21} \Big(\int_{\mathcal{X}} \hat{g}_1\Big) \alpha(t) - \mu_{22} \Big(\int_{\mathcal{X}} \hat{g}_2\Big) \beta(t) \right).
 	\end{aligned}
\right.
\end{equation*}
We first notice that $(1,1)$ is obviously a steady state here. Moreover, the determinant of the Jacobian matrix of the linearized dynamical system at point $(1,1)$ is $(\mu_{11}\mu_{22}-\mu_{12}\mu_{21}) \int_{\mathcal{X}}\hat{g}_1 \int_{\mathcal{X}} \hat{g}_2<0 $. So the linearized system around $(1,1)$ is hyperbolic. From Hartman-Grobman Theorem (see part 9.3 in \cite{Teschl}) concerning the linearized system, we can conclude that not only $(\hat{g}_1, \hat{g}_2)$ is not stable, but also some solutions tend towards it.\\

\noindent
(\ref{item:special2}) It remains to deal with the last uncertain case : $H_1>0$, $H_2>0$, $H_2 \mu_{11}-H_1 \mu_{21} <0$ and $H_1\mu_{22}-H_2 \mu_{12}=0$. The case (\ref{item:special3}) where $H_2 \mu_{11}-H_1 \mu_{21} =0$ and $H_1\mu_{22}-H_2 \mu_{12}<0$ can be studied following similar arguments.\\

\noindent Thanks to point (\ref{item:special1}a), we already know that the steady state $(\bar{g}_1,0)$ is stable. We prove that the steady state $(0, \bar{g}_2)$ is unstable; more precisely, we show that in any neighborhood of $(0, \bar{g}_2)$, there exists a solution to \eqref{eq:systeme} that does not tend to $(0, \bar{g}_2)$  as $t\to \infty$, but there also exist some solutions that tend towards it. \\

To prove that this steady state is unstable, we only consider solutions in the form of $(g_1,g_2)=(\al(t)A_1^1,\beta(t) A_1^2)$. The dynamics are then given by the following Lotka-Volterra system
\begin{equation*}
 \left\{
	\begin{aligned}
	 &\dfrac{d}{dt} \alpha(t) = \alpha(t) \left( H_1- \mu_{11}  \alpha(t) - \mu_{12}\beta(t) \right),\\
	 &\dfrac{d}{dt} \beta(t) = \beta(t) \left( H_2- \mu_{21} \alpha(t) - \mu_{22}  \beta(t) \right).
 	\end{aligned}
\right.
\end{equation*}
In view of the conditions on the parameters in this case, any solution with $\al(0)>0$, converges to $(\f{H_1}{\mu_{11}},0)$ (see \cite{champagnat-these}, p.186 Theorem 1(c)) and thus $(\al(t)A_1^1,\beta(t) A_1^2)$ converges to $(\bar g_1,0)$. Since one can choose $\al(0)$ and $\beta(0)$ such that $(\al(0)A_1^1,\beta(0) A_1^2)$ is arbitrarily close to $(0,\bar g_2)$, we obtain that this point is unstable.\\

\noindent
 Finally, to find a solution that tends towards the steady state, consider the initial condition $(g_1^0,g_2^0)=(0, g_2^0)$, with $g_2^0\in L^2(\CX)$ any nonnegative and non-trivial function. Then, since for all $t\geq 0$ we have $g_1(t,x)=0$, it follows from Theorem \ref{theo:convmono} that, as $t\to \infty$, $g_2(t,x)\to \bar g_2(x)$ in $L^\infty$.

\medskip
\noindent
\textit{e-mail adress}: helene.leman@polytechnique.edu\\
\textit{e-mail adress}: sylvie.meleard@polytechnique.edu\\
\textit{e-mail adress}: Sepideh.Mirrahimi@math.univ-toulouse.fr


\begin{thebibliography}{99}

\bibitem{MA.JC.GR:13}
M. Alfaro, J. Coville, and G. Raoul,
\newblock {\em Traveling waves in a nonlocal equation as a model for a population
  structured by a space variable and a phenotypical trait},
\newblock {Comm. Partial Differential Equations (CPDE)}, {\bf 38}(12) (2013),2126--2154.


\bibitem{ADP12}
 A. Arnold, L. Desvillettes and C. Pr\'evost,
 \newblock {\em Existence of nontrivial steady states for populations structured with respect to space and a continuous trait}, 
 \newblock { Commun. Pure Appl. Anal.}, {\bf 11}(1) (2012),83--96.
 
 
 \bibitem{OB.VC.NM.RV:12}
O. B\'enichou , V. Calvez, N. Meunier and R. Voituriez,
\newblock {\em Front acceleration by dynamic selection in fisher population waves},
\newblock { Phys. Rev. E}, 86:041908, (2012).
 
 
 
 \bibitem{HB.GC:12}
H. Berestycki and G. Chapuisat,
\newblock {\em Traveling fronts guided by the environment for reaction-diffusion
  equations},
\newblock { Networks and Heterogeneous Media}, {\bf 8} (2013),79--114.
 
 
 \bibitem{BNPR09}
H. Berestycki, G. Nadin, B. Perthame and L. Ryzhik, 
\newblock {\em Non-local Fisher-KPP equation: traveling waves and steady states},
\newblock{ Nonlinearity}, {\bf 22} (2009),2813--2844.
 
  

  
  
  
  
  \bibitem{BC13}
E. Bouin and V. Calvez,
 \newblock { \em Travelling waves for the cane toads equation with bounded traits},
 \newblock arXiv:1309.4755, (2013).
 
 
\bibitem{EB.VC.NM.SM:12}
E. Bouin, V. Calvez, N. Meunier, S. Mirrahimi, B. Perthame, G. Raoul and R. Voituriez,
\newblock {\em Invasion fronts with variable motility: phenotype selection, spatial
  sorting and wave acceleration},
\newblock { C. R. Math. Acad. Sci. Paris}, {\bf 350}(15-16) (2012),761--766.

\bibitem{EB.SM:13}
E. Bouin and S. Mirrahimi,
\newblock {\em A Hamilton-Jacobi approach for a model of population structured by space and trait},
\newblock {Preprint, arXiv:1307.8332}, (2013).

\bibitem{Brezis}
H. Brezis,
\newblock{ ``Analyse Fonctionnelle. Th\'eorie et Applications''}, 
\newblock Masson, 1983.

\bibitem{champagnat-these}
N. Champagnat,
\newblock Mathematical study of stochastic models of evolution belonging to the ecological theory of adaptive dynamics. 
  \newblock PhD thesis, University of Nanterre (Paris X), 2004.

\bibitem{C06} 
N. Champagnat,
\newblock {\em A microscopic interpretation for adaptive dynamics trait substitution sequence models},
\newblock{ Stochastic Process. Appl.}, {\bf 116}(8) (2006),1127--1160.  
  
 \bibitem{CM07}
 N. Champagnat and S. M\'el\'eard,
 \newblock {\em Invasion and adaptive evolution for individual-based structured populations},
 \newblock{ J. Math. Biol.}, {\bf 55} (2007),147--188.
 
  \bibitem{CFM08} 
  N. Champagnat, R. Ferri\`ere and S. M\'el\'eard,
  \newblock {\em From individual stochastic processes to macroscopic models in adaptive
  evolution},
  \newblock{ Stoch. Models}, {\bf 24}(1) (2008),2--44.


 \bibitem{Co13}
J. Coville,
\newblock {\em Convergence to equilibrium for positive solutions of some mutation-selection model}, 
\newblock arXiv:1308.6471, (2013).

\bibitem{cronin}
J. Cronin,
\newblock{ ``Fixed Points and Topological Degree in Nonlinear Analysis''},
\newblock American Mathematical Society, 1972.

\bibitem{DFP04}
L. Desvillettes, R. Ferri\`ere and C. Prevost, 
\newblock {\em Infinite dimensional reaction-diffusion for population dynamics},
\newblock{Preprint CMLA, ENS Cachan}, (2004).



    
\bibitem{DLM00}
  U. Dieckmann, R. Law and J.A.J. Metz, 
  \newblock{ `` The Geometry of Ecological
  Interactions: Symplifying Spatial Complexity''},
  \newblock Cambridge Univ. Press, Cambridge, 2000.




\bibitem{DL94b} 
R. Durrett and S. Levin,
\newblock {\em Stochastic spatial models: a user's guide to ecological applications},
\newblock{Phil. Trans. Roy. Soc. London}, {\bf 343} (1994),329--350.

\bibitem{Endler77} 
 J.A. Endler,
\newblock{ Ge``ographic Variation, Speciation, and Clines''}.
\newblock Princeton university Press, 1977.
  
  \bibitem{Evans}
L.C. Evans,
\newblock{ ``Partial Differential Equations, second edition''}, 
\newblock American Mathematical Society, 2010.



\bibitem{FM88}
D. Futuyama and G. Moreno,
\newblock {\em The evolution of ecological specialization} 
\newblock{ Ann. Rev. Ecol. Syst.}, {\bf 19} (1988),207--233. 
%


 



\bibitem{K02}
 R. Kassen,
\newblock {\em The experimental evolution of specialists, generalists and the maintenance of diversity},
\newblock{J. Evol. Biol.}, {\bf 15} (2002),173--190.






\bibitem{McG99}
 J. McGlad,
\newblock{ ``Advanced Ecological Theory: Principles and Applications''},
\newblock Blackwell Science, Oxford, 1999.

\bibitem{Mayr63} 
E. Mayr,
\newblock{ ``Animal Species and Evolution''},
\newblock Harvard University Press, Cambridge, 1963.




\bibitem{Murray89}
J.D. Murray, 
\newblock{ ``Mathematical Biology''}. 
\newblock Biomathematics texts 19, Springer-Verlag, Berlin, 1989.


\bibitem{Pal06} 
 B.L. Phillips, G.P. Brown, J.K. Webb and R. Shine,
  \newblock {\em Invasion and the evolution of speed in toads}, 
  \newblock{Nature}, (2006),439--803.

  







  
\bibitem{Teschl}
G. Teschl,
\newblock{ ``Ordinary Differential Equations and Dynamical Systems''},
\newblock American Mathematical Society, 2010.



\bibitem{TK96} 
D. Tilman and  P. Kareiva,
\newblock{ ``Spatial Ecology: The Role of Space in Population Dynamics and Interspecific Interactions''}.
\newblock Princeton University Press, Princeton, NJ, 1996.

  
  
  \end{thebibliography}
\end{document}